\newcommand{\bl}{\left|}                            
\newcommand{\br}{\right|}                           
\newcommand{\be}{\begin{equation}}
\newcommand{\ee}{\end{equation}}
\def\c{\cite }
\def\d{{\,\rm d\,}}
\def\phi{\varphi}
\newcommand{\bi}{\begin{itemize}}
\newcommand{\ei}{\end{itemize}}
\newcommand{\bn}{\begin{enumerate}}
\newcommand{\en}{\end{enumerate}}
\def\R{\mathbb{R}}                                     
\def\N{\mathbb{N}}
\def\cF{\mathcal{F}}
\def\cU{\mathcal{U}}
\def\fF{\mathfrak{F}}
\newcommand{\hide}[1]{}          
\theoremstyle{plain}
\newtheorem{thm}{Theorem}[section]
\newtheorem{prop}[thm]{Proposition}
\newtheorem{lemma}[thm]{Lemma}
\newtheorem{rem}[thm]{Remark}
\newtheorem{example}[thm]{Example}
\theoremstyle{definition}
\newtheorem{definition}[thm]{Definition}
\numberwithin{equation}{section}
\begin{document}
 \title{Finite elements in some vector lattices of nonlinear operators}
 \author{M.~A.~Pliev and M.~R.~Weber}
 \address{\footnote{\,The first named author was supported by the Russian Foundation of Fundamental Research,
 the grant number 14-01-91339.} South Mathematical Institute of the Russian Academy of Sciences \\
 str. Markusa 22 \\
 362027 Vladikavkaz,  Russia}
 \email{maratpliev@gmail.com}
 \address{Technical University Dresden \\
 Department of Mathematics, Institut of Analysis \\ 
 01062 Dresden, Germany}
 \email{martin.weber@tu-dresden.de}
 \keywords{Finite elements, orthogonally additive order bounded operators, Uryson operators, rank-one operators}

 \subjclass[2010]{Primary 47H07; Secondary 47H99.}

\begin{abstract}
We study the collection of finite elements $\Phi_{1}(\mathcal{U}(E,F))$ in the vector lattice $\mathcal{U}(E,F)$ of 
orthogonally additive, order bounded (called abstract Uryson) operators between two vector lattices $E$ and $F$, 
where $F$ is Dedekind complete. 
In particular, for an atomic vector lattice $E$ it is proved that for a finite element in $\phi\in \mathcal{U}(E,\R)$ 
there is only a finite set of mutually disjoint atoms, where $\phi$ does not vanish and, 
for an atomless vector lattice the zero-vector is the only finite element in the band of $\sigma$-laterally continuous 
abstract Uryson functionals.     
We also describe the ideal $\Phi_{1}(\cU(\R^n,\R^m))$ for $n,m\in\N$ and consider rank one operators  
to be finite elements in $\cU(E,F)$.
\end{abstract}

\maketitle

%
%
\section{Introduction}
%
%
The last time finite elements in vector lattices have been an object of an active investigation
\cite{Ch-1, Ch-2, Ch-3, H, Mal, W}.
This class of elements in Archimedean vector lattices was introduced as an abstract analogon of continuous
functions (on a topological space) with compact support by Makarov and Weber in  in 1972, see \cite{MW74}.
Recently a systematic treatment of finite elements in vector lattices appeared in \cite{Web14}.
On the other hand the study of nonlinear maps between vector lattices is also a growing area of Functional analysis,
where the background has to be found in the nonlinear integral operators, see e.g. \cite{KZPS}.
The interesting class of nonlinear, order bounded, orthogonally additive operators, called abstract Uryson operators,
was introduced and studied in 1990 by Maz\'{o}n and Segura de Le\'{o}n \cite{Maz-1,Maz-2}, and then
considered to be defined on lattice-normed spaces by Kusraev and Pliev in \cite{Ku-1,Ku-2,Pl-3}.
Now a theory of abstract Uryson operators is also a subject of intensive investigations \cite{BAP,Gum,PP}.
In this paper the investigation of finite
elements is extended to the vector lattice $\mathcal{U}(E,F)$ of
abstract Uryson operators from a vector lattice $E$ to a Dedekind complete vector lattice $F$.
\section{Preliminaries}
%
%
The  goal of this section is to introduce some basic definitions and facts. General information on vector lattices
the reader can find in the books \cite{Al,Ku, Web14, Za}.
\par
\begin{definition} \label{def:ddmjf0}
Let $E$ be an Archimedean vector lattice.
An element $\varphi\in E$ called {\it finite}, if there is an element $z\in E$ satisfying the following condition:
for any element $x\in E$ there exists a number $c_{x}>0$ such that the following inequality holds
\[
|x|\wedge n|\varphi|\leq c_{x}z\,\,\,\text{for all}\,\,n\in\N.
\]
\end{definition}
 For a finite element $\varphi$ the (positive) element $z$ is called a {\it majorant} of $\varphi$.
 If a finite element $\varphi$ possesses a majorant which itself is a finite element then $\varphi$
 is called {\it totally finite}.  The collections of all finite and totally finite elements of a vector lattice $E$
are ideals in $E$ and will be denoted by $\Phi_1(E)$ and $\Phi_2(E)$, respectively.
It is clear that $0$ is always a finite element. For our purpose we mention that the relations
$\Phi_1(E)=E$ and $\Phi_1(E)=\{0\}$ are possible (for the complete list of the relations between
$E, \, \Phi_1(E)$ and $\Phi_2(E)$ see \c{Web14}, section 6.2).
Finite elements in vector and Banach lattices have been studied in \cite{Ch-1},
in sublattices of vector lattices in \cite{Ch-2}, in $f$-algebras and product algebras in \c{Mal} and in vector lattices
of linear operators in \cite{Ch-3, H}.
\par
The relations between the finite elements in $E$ and the finite elements in vector sublattices of $E$ are manifold.
The result we need later is the following.
\begin{prop}[\c{Web14}, Theorem 3.28 and Corollary 3.29]\label{t3.28}
Let $E_0$ be a projection band in the vector lattice $E$, and $p_0$ the band projection from $E$ onto  $E_0$.
Then $p_0(\Phi_1(E))=\Phi_1(E)\cap E_0=\Phi_1(E_0)$.
If $E_1$ is another projection band in $E$ and $E=E_0\oplus E_1$ then $\Phi_1(E)=\Phi_1(E_0)\oplus \Phi_1(E_1)$.
\end{prop}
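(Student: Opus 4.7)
The plan is to exploit the standard facts about band projections: $p_0$ is a positive idempotent lattice homomorphism satisfying $0\le p_0(x)\le x$ for $x\ge 0$, it commutes with lattice operations, and $E_0$ is an ideal in $E$. I would prove the chain of inclusions
\[
p_0(\Phi_1(E))\subseteq \Phi_1(E)\cap E_0\subseteq \Phi_1(E_0)\subseteq p_0(\Phi_1(E))
\]
and then deduce the direct sum decomposition as a formality.

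For the first inclusion, let $\varphi\in\Phi_1(E)$ have majorant $z\in E$. For arbitrary $x\in E$, using $|p_0(\varphi)|=p_0(|\varphi|)\le|\varphi|$, I get
\[
|x|\wedge n|p_0(\varphi)|\le |x|\wedge n|\varphi|\le c_x z,
\]
so $z$ is still a majorant of $p_0(\varphi)$ in $E$, while $p_0(\varphi)\in E_0$. For the second inclusion, take $\varphi\in \Phi_1(E)\cap E_0$ with majorant $z\in E$. Given $y\in E_0$, the element $|y|\wedge n|\varphi|$ lies in the ideal $E_0$, so applying $p_0$ to the inequality $|y|\wedge n|\varphi|\le c_y z$ preserves the left side and yields $|y|\wedge n|\varphi|\le c_y p_0(z)$. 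Thus $p_0(z)\in E_0$ is a majorant of $\varphi$ in $E_0$.

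The third inclusion is the step that requires the key observation. Let $\varphi\in \Phi_1(E_0)$ with majorant $z_0\in E_0$, and take any $x\in E$. Since $E_0$ is a projection band, $(I-p_0)(|x|)\in E_0^d$ is disjoint from $|\varphi|\in E_0$, hence $|x|\wedge n|\varphi|$ belongs to the ideal $E_0$ and equals its own projection:
\[
|x|\wedge n|\varphi|=p_0(|x|\wedge n|\varphi|)=p_0(|x|)\wedge n|\varphi|\le c_{p_0(|x|)}\,z_0.
\]
Setting $c_x:=c_{p_0(|x|)}$ shows $z_0$ is a majorant of $\varphi$ in $E$, so $\varphi=p_0(\varphi)\in p_0(\Phi_1(E))$. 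This closes the chain and shows that all three sets coincide.

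Finally, assume $E=E_0\oplus E_1$ with both summands projection bands, and denote by $p_0,p_1$ the associated band projections. For $\varphi\in \Phi_1(E)$, write $\varphi=p_0(\varphi)+p_1(\varphi)$; by the first part applied to each projection, $p_i(\varphi)\in \Phi_1(E_i)$. Conversely, if $\varphi_i\in \Phi_1(E_i)=\Phi_1(E)\cap E_i$ for $i=0,1$, then since $\Phi_1(E)$ is an ideal and in particular a linear subspace, $\varphi_0+\varphi_1\in \Phi_1(E)$. The main subtlety throughout is ensuring the majorant transfers correctly between $E$ and $E_0$; the third inclusion is the crux, and it rests entirely on the disjointness identity $(I-p_0)(|x|)\wedge|\varphi|=0$ when $\varphi\in E_0$.
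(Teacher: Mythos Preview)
The paper does not actually supply a proof of this proposition: it is quoted as a known result from \cite{Web14} (Theorem~3.28 and Corollary~3.29) and used later without argument. So there is no proof in the paper to compare against.

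That said, your argument is correct and self-contained. The three inclusions are handled cleanly: the first uses $|p_0(\varphi)|\le|\varphi|$, the second projects the majorant into $E_0$, and the third---the only nontrivial step---reduces $|x|\wedge n|\varphi|$ to $p_0(|x|)\wedge n|\varphi|$ via the ideal property of $E_0$ (equivalently, via the disjointness you mention). The direct sum statement then follows immediately since $\Phi_1(E)$ is a linear subspace. This is the standard route and matches what one finds in \cite{Web14}; nothing is missing.
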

\par
Recall that an element $z$ in a vector lattice $E$ is said to be a {\it component} or a
{\it fragment} of $x$ if $z\bot(x-z)$, i.e. if $\bl z\br\wedge \bl x-z\br=0$.
The notations $x=y\sqcup z$ and $z\sqsubseteq x$ mean that $x=y+z$ with $y\bot z$ and that $z$ is a fragment
of $x$, respectively.
The set of all fragments of the element $x\in E$ is denoted by  $\cF_x$.
Let be $x\in E$. A collection $(\rho_\xi)_{\xi\in \Xi}$ of elements in $E$ is called a {\it partition of} $x$ if
$\bl\rho_\xi\br\wedge \bl\rho_\eta\br=0$, whenever $\xi\neq \eta$ and $x=\sum_{\xi\in \Xi}\rho_\xi$.
\begin{definition} \label{def:ddmjf1}
Let $E$ be a vector lattice and let $X$ be a real vector space.
An operator $T\colon E\rightarrow X$ is called \textit{orthogonally additive} if $T(x+y)=T(x)+T(y)$
whenever $x,y\in E$ are disjoint elements, i.e. if $\bl x\br\wedge \bl y\br =0$.
\end{definition}
It follows from the definition that $T(0)=0$.
It is immediate that the set of all orthogonally additive operators
is a real vector space with respect to the natural linear operations.
\par
So, the orthogonal additivity of an operator $T$ will be expressed as \linebreak $T(x\sqcup y)=T(x)+T(y)$.
\begin{definition}
Let $E$ and $F$ be vector lattices. An orthogonally additive operator $T\colon E\rightarrow F$ is called:
\begin{itemize}
  \item \textit{positive} if $Tx \geq 0$ holds in $F$ for all $x \in E$,
  \item \textit{order bounded} if $T$ maps any order bounded subset of $E$ into an order bounded subset of $F$.
\end{itemize}
An orthogonally additive order bounded operator $T\colon E\rightarrow F$ is called an \textit{abstract Uryson} operator.
\end{definition}
The set of all abstract Uryson operators from $E$ to $F$ we denote by $\mathcal{U}(E,F)$.
 If $F=\R$ then an element $f\in \mathcal{U}(E,\R)$ is called an {\it abstract Uryson functional}.
\par
A positive linear order bounded operator $A\colon E\to F$ defines a positive abstract Uryson operator by means
of $T(x)=A(\bl x\br)$ for each $x\in E$.
\par
 We will consider some examples.
The most famous ones are the nonlinear integral Uryson operators which are well known and thoroughly studied e.g.
in \cite{KZPS}, chapt.5.
\par
Let $(A,\Sigma,\mu)$ and $(B,\Xi,\nu)$ be $\sigma$-finite complete measure spaces and denote the completion of 
their product measure space by $(A\times B,\mu\times \nu)$.  
Let $K\colon A\times B\times\R\rightarrow\R$ be a function which satisfies the following conditions\footnote{$(C_{1})$ and $(C_{2})$ 
are called the Carath\'{e}odory conditions.}:
\begin{enumerate}
  \item[$(C_{0})$] $K(s,t,0)=0$ for $\mu\times\nu$-almost all $(s,t)\in  A\times B$;
  \item[$(C_{1})$] $K(\cdot,\cdot,r)$ is $\mu\times\nu$-measurable for all $r\in\R$;
  \item[$(C_{2})$] $K(s,t,\cdot)$ is continuous on $\R$ for $\mu\times\nu$-almost all $(s,t)\in A\times B$.
\end{enumerate}
Denote by $L_0(B,\Xi,\nu)$ or, shortly by $L_0(\nu)$, the ordered vector space of all $\nu$-measurable and 
$\nu$-almost everywhere finite functions on $B$ with the order
 $f\leq g$ defined as $f(t)\leq g(t) $ $\nu$-almost everywhere on $B$.
Then $L_0(\nu)$ is a Dedekind complete vector lattice.
Analogously, the space $L_0(A,\Sigma,\mu)$, or shortly $L_0(\mu)$, is defined. 
\par 
Given $f\in L_{0}(\nu)$ the function $|K(s,\cdot,f(\cdot))|$ is $\nu$-mea\-surable  for $\mu$-almost
all $s\in A$ and $h_{f}(s):=\int_{B}|K(s,t,f(t))|\,d\nu(t)$ is a well defined $\mu$-measurable function.
Since the function $h_{f}$ can be infinite on a set of positive measure, we define
\[
\text{Dom}_{B}(K):=\{f\in L_{0}(\nu)\colon \,h_{f}\in L_{0}(\mu)\}.
\]
\begin{example}[{\rm Uryson integral operator}]\label{Ex-0} 
Define an  operator 
\[   T \colon\text{Dom}_{B}(K)\rightarrow L_{0}(\mu)\]  
by 
\begin{equation}\label{U}
(Tf)(s)=\int_{B}K(s,t,f(t))\,\d\nu(t)\quad\mu\text{-a.e.}  
\end{equation}
Let $E$ and $F$ be order ideals in $L_{0}(\nu)$ and $L_{0}(\mu)$, respectively and $K$ a function
satisfying the conditions $(C_{0})$\,-\,$(C_{2})$. Then (\ref{U}) is an orthogonally additive, 
in general, not order bounded, integral operator acting from
$E$ to $F$ provided that $E\subseteq \text{Dom}_{B}(K)$ and $T(E)\subseteq F$. 
The operator $T$ is called  {\rm Uryson (integral) operator}.
\end{example}
\par
We consider the vector space $\R^m$ for $m \in \N$ as a vector lattice with the usual coordinate-wise order:
for any $x,y \in \R^m$ we set $x \leq y$ provided $e_i^*(x) \leq e_i^*(y)$ for all $i = 1, \ldots, m$,
where $(e_i^*)_{i=1}^m$ are the coordinate functionals on $\R^m$.
\begin{example} \label{Ex-1}
A map $T\colon \R^n\rightarrow\R^{m}$ belongs to $\mathcal{U}(\R^{n},\R^{m})$ if and only if there are real
functions $T_{i,j}:\R\rightarrow\R$,
$1\leq i\leq m$, $1\leq j\leq n$ satisfying the condition $T_{i,j}(0)=0$ and such that $T_{i,j}([a,b])$ is (order) bounded 
in $\R^m$ for each (order) interval $[a,b]\subset \R^n$, where the $i$-th component of the vector $T(x)$ is
calculated by the usual matrix rule, i.e.
$$
\left(T(x)\right)_i = e_i^*\bigl(T(x_{1},\dots,x_{n})\bigr) = \sum_{j=1}^{n}T_{i,j}(x_{j}), \quad i=1,\ldots,m
$$
In this case we write $T=(T_{i,j})$.
\end{example}
For more examples of abstract Uryson operators see \c{PP}.
\par\medskip
In $\mathcal{U}(E,F)$ the order is introduced as follows:  $S\leq T$ whenever $T-S$ is a positive operator.
Then $\mathcal{U}(E,F)$ becomes an ordered vector space.
If the vector lattice $F$ is Dedekind complete the following theorem is well known.
\begin{thm}[\cite{Maz-1}, Theorem~3.2.] \label{f}
Let $E$ and $F$ be vector lattices with $F$ Dedekind complete. Then $\mathcal{U}(E,F)$ is a Dedekind complete vector lattice. 
Moreover, for any $S,T\in \mathcal{U}(E,F)$ and $x\in E$ the following formulas hold
\begin{enumerate}
  \item $(T\vee S)(x)=\sup\{T(y)+S(z)\colon x = y \sqcup z\}$.
  \item $(T\wedge S)(x)=\inf\{T(y)+S(z)\colon x = y \sqcup z\}$.
  \item $T^{+}(x)=\sup\{Ty\colon y \, \sqsubseteq x\}$.
  \item $T^{-}(x)=-\inf\{Ty\colon y \, \sqsubseteq x\}$.
  \item $\bl T \br(x)=\left(T^+\vee T^-\right)(x) = \sup \{T(y)-T(z) \colon x=y\sqcup z\}$
  \item $|T(x)|\leq|T|(x)$.
  \end{enumerate}
\end{thm}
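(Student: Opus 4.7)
The plan is to imitate the Riesz--Kantorovich construction for linear operators, with fragments of $x$ playing the role that elements of the order interval $[0,x]$ play in the linear theory. I would begin by taking formula~(3) as the \emph{definition}
\[
T^+(x) := \sup\{T(y) : y \sqsubseteq x\},
\]
and verifying that the supremum exists in $F$. Every fragment $y$ of $x$ satisfies $|y|\le|x|$, because $|y|\wedge|x-y|=0$ together with $x=y+(x-y)$ forces $|x|=|y|+|x-y|$; hence $\{T(y):y\sqsubseteq x\}$ is contained in $T([-|x|,|x|])$, which is order bounded in $F$ by the order boundedness of $T$. Dedekind completeness of $F$ then yields the supremum, and the same upper bound shows that $T^+$ is itself order bounded.

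The heart of the argument is orthogonal additivity of $T^+$. Given $x=x_1\sqcup x_2$, I would establish that every fragment $y$ of $x$ splits as $y=y_1\sqcup y_2$ with $y_i\sqsubseteq x_i$, and conversely every such disjoint sum is a fragment of $x$. The forward direction uses the Riesz decomposition lemma applied to $|y|\le|x_1|+|x_2|$ (with $|x_1|\wedge|x_2|=0$) to produce $y=y_1+y_2$ with $|y_i|\le|x_i|$, from which $y_1\perp y_2$ follows; then $|y_i|\wedge|x_i-y_i|=0$ is extracted from $|y|\wedge|x-y|=0$ by a disjointness chase inside the bands generated by $|x_1|$ and $|x_2|$. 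The converse is easier. Combining this correspondence with the orthogonal additivity of $T$ itself and the fact that the supremum over a Cartesian product of upward-directed sets splits yields
\[
T^+(x_1\sqcup x_2) \;=\; \sup_{y_i\sqsubseteq x_i}\bigl(T(y_1)+T(y_2)\bigr) \;=\; T^+(x_1)+T^+(x_2),
\]
so $T^+\in\mathcal{U}(E,F)$. The inequalities $T^+\ge 0$ (take $y=0$) and $T^+\ge T$ (take $y=x$) are immediate; if $S\in\mathcal{U}(E,F)$ dominates both $T$ and $0$, then for any $y\sqsubseteq x$ the identity $S(x)=S(y)+S(x-y)\ge T(y)+0$ gives $S\ge T^+$. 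This proves~(3), and (4) follows by applying~(3) to $-T$.

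The remaining formulas drop out: (1) follows by writing $T\vee S=(T-S)^++S$ and applying (3) to $T-S$ on a partition $x=y\sqcup z$; (2) follows by the duality $T\wedge S=-((-T)\vee(-S))$; (5) uses $|T|=T^+\vee T^-$ together with~(1); and (6) is a direct consequence of~(5) by taking $(y,z)=(x,0)$ and $(y,z)=(0,x)$. For Dedekind completeness, given an order-bounded upward-directed net $(T_\alpha)$ in $\mathcal{U}(E,F)^+$, one defines $T(x):=\sup_\alpha T_\alpha(x)$ pointwise, with the supremum guaranteed by the Dedekind completeness of $F$; orthogonal additivity passes to the limit because the net is directed (so sums distribute over the supremum), and order boundedness is inherited from any common upper bound.

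The main obstacle I expect is the fragment splitting in the second paragraph: showing that every fragment of $x_1\sqcup x_2$ decomposes canonically as a disjoint sum of fragments of $x_1$ and $x_2$. Because $E$ need not be Dedekind complete, the bands generated by $x_1$ and $x_2$ are not automatically projection bands, so this decomposition cannot simply be read off from a band projection and must be argued directly from the Riesz decomposition property and the disjointness $|x_1|\wedge|x_2|=0$. Once this geometric lemma is in hand, the remainder of the theorem is essentially a transcription of the classical Riesz--Kantorovich argument with ``$\sqsubseteq$'' in place of the usual interval order.
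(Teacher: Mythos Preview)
The paper does not prove this theorem at all: it is quoted from \cite{Maz-1}, Theorem~3.2, and placed in the Preliminaries section without argument. There is therefore no ``paper's own proof'' to compare against.

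That said, your outline is the standard route (and essentially what Maz\'{o}n and Segura de Le\'{o}n do): define $T^{+}$ pointwise by the fragment supremum, use Dedekind completeness of $F$ to see it exists, verify orthogonal additivity via the fragment-splitting lemma for $x_1\sqcup x_2$, check minimality, and then read off (1), (2), (4), (5), (6) from (3) by the usual lattice identities. Your identification of the fragment decomposition for $y\sqsubseteq x_1\sqcup x_2$ as the crux is exactly right; note, however, that this lemma is a known fact about Boolean algebras of fragments (it appears, e.g., in the sources underlying \cite{PP} and \cite{BAP}), so you need not rederive it from scratch via Riesz decomposition --- one simply uses that $\mathcal{F}_x$ is a Boolean algebra and that disjoint components give complementary subalgebras. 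One small caution: your derivation of (5) from $|T|=T^{+}\vee T^{-}$ and (1) produces a supremum over nested fragments $y'\sqsubseteq y$, $z'\sqsubseteq z$ rather than directly over partitions $x=y\sqcup z$; closing that gap requires one more pass with the fragment-splitting lemma (or, more simply, use $|T|=T^{+}+T^{-}$ and argue that the supremum in (3) and the supremum in (4) can be attained along a common refining partition).
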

The formulas (1)\,-\,(5) are generalizations of the well known Riesz-Kanto\-rovich formulas for
linear regular operators (see \cite{Al}, Theorems 1.13 and 1.16).
\par
We also need the following result which represents the lattice operations in $\mathcal{U}(E,F)$ in terms of directed systems.

\begin{thm}[\cite{Maz-2}, Lemma~3.2] \label{fjjjjjgg}
Let $E$ and $F$ be vector lattices with $F$ Dedekind complete. Then for any $S,T\in\mathcal{U}(E,F)$ and $x\in E$ we have
\begin{enumerate}
  \item $\Big\{\sum\limits_{i=1}^{n}S(x_i)\wedge T(x_i)\colon x=\bigsqcup\limits_{i=1}^nx_i,\,n\in\N\Big\}\downarrow(S\wedge T)(x)$.
  \item $\Big\{\sum\limits_{i=1}^{n}S(x_{i})\vee T(x_i)\colon x=\bigsqcup\limits_{i=1}^nx_i,\,n\in\N\Big\}\uparrow(S\vee T)(x)$.
  \item $\Big\{\sum\limits_{i=1}^{n}|T(x_{i})|         \colon x=\bigsqcup\limits_{i=1}^nx_i,\,n\in\N\Big\}\uparrow|T|(x)$.
\end{enumerate}
\end{thm}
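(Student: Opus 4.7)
The plan is to prove formula (2) in detail and then obtain (1) and (3) by small adjustments; the main tool throughout is Theorem~\ref{f}. Denote the set on the left of (2) by $A$. For the upper bound $\sup A \leq (S\vee T)(x)$, fix a partition $x = \bigsqcup_i x_i$; orthogonal additivity of $S\vee T \in \mathcal{U}(E,F)$ gives $(S\vee T)(x) = \sum_i (S\vee T)(x_i)$, and Theorem~\ref{f}(1) applied with the trivial splittings $x_i = x_i \sqcup 0$ and $x_i = 0 \sqcup x_i$ yields $(S\vee T)(x_i) \geq S(x_i) \vee T(x_i)$, whence the bound after summation. For the reverse inequality I would reuse Theorem~\ref{f}(1): every splitting $x = y\sqcup z$ produces a two-element partition $\{y,z\}$ whose associated sum $S(y) \vee T(y) + S(z) \vee T(z)$ dominates $S(y) + T(z)$, so taking $\sup$ over splittings gives $\sup A \geq (S\vee T)(x)$.

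To upgrade this to the $\uparrow$ convergence in (2), I would show $A$ is upward directed by refinement. Given two partitions $\{y_j\}_j$ and $\{z_k\}_k$ of $x$ and a common refinement $\{w_{jk}\}_{j,k}$ with $y_j = \bigsqcup_k w_{jk}$ and $z_k = \bigsqcup_j w_{jk}$, orthogonal additivity of $S$ and $T$ combined with the elementary inequality $(\sum_k a_k) \vee (\sum_k b_k) \leq \sum_k (a_k \vee b_k)$ in $F$ delivers $\sum_j S(y_j) \vee T(y_j) \leq \sum_{j,k} S(w_{jk}) \vee T(w_{jk})$, and similarly for the $z$-partition. Hence refinement weakly increases partition sums, and combined with the first step, $A$ is directed and its sums increase monotonically to $(S\vee T)(x)$.

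Statement (1) follows by mirroring the whole argument with $\wedge$ in place of $\vee$: one invokes Theorem~\ref{f}(2), the bound $(S\wedge T)(x_i) \leq S(x_i) \wedge T(x_i)$ (obtained again by trivial splittings), and the dual inequality $\sum_k (a_k \wedge b_k) \leq (\sum_k a_k) \wedge (\sum_k b_k)$, so refinement now weakly decreases partition sums and the set decreases down to $(S\wedge T)(x)$. Statement (3) is derived from (2) by substituting $-T$ for $T$: Theorem~\ref{f}(1) combined with Theorem~\ref{f}(5) yields $(T \vee (-T))(x) = |T|(x)$, while pointwise $T(x_i) \vee (-T(x_i)) = |T(x_i)|$ holds in $F$, so the left-hand side of (3) is a special case of that of (2).

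The step I expect to be most delicate is the construction of a common refinement of two partitions of $x$ in a general vector lattice $E$: the fragments of a given element need not be closed under finite meets, so one cannot simply set $w_{jk} = y_j \wedge z_k$. I plan to build the $w_{jk}$ via band projections onto the principal band generated by $|y_j|$ inside the band generated by $|x|$, exploiting that every $|y_j|$ and $|z_k|$ lies in the latter together with the mutual disjointness of each family; should the required projections fail to exist in $E$, a more careful construction (or a weaker reading of the $\uparrow$ notation, recording only the supremum identity and the monotonicity under refinement) will be needed.
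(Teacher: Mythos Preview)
The paper does not prove this theorem: it is stated with the attribution ``\cite{Maz-2}, Lemma~3.2'' and no argument is given. So there is no in-paper proof to compare against; your proposal is an independent proof of a quoted result.

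On the merits, your outline is sound. The two inequalities establishing $\sup A=(S\vee T)(x)$ are correct as written, and the derivations of (1) and (3) from (2) are fine (for (3) you are applying (2) to the pair $(T,-T)$, using that $\mathcal{U}(E,F)$ is a vector lattice so $T\vee(-T)=|T|$). The one place where you hedge more than necessary is the common-refinement step. You do not need band projections, and no extra hypothesis on $E$ is required: for any $x\in E$ the set $\mathcal{F}_x$ of fragments of $x$ is a Boolean algebra (see e.g.\ \cite{Al} or \cite{LZ}); in particular, given two finite partitions $x=\bigsqcup_j y_j=\bigsqcup_k z_k$ into fragments, the Boolean meets $w_{jk}$ of $y_j$ and $z_k$ in $\mathcal{F}_x$ furnish a common refinement with $y_j=\bigsqcup_k w_{jk}$ and $z_k=\bigsqcup_j w_{jk}$. (For $x\ge 0$ this Boolean meet is simply $y_j\wedge z_k$; for general $x$ one splits into $x^+$ and $x^-$ or works directly in the Boolean algebra $\mathcal{F}_x$.) With that in hand, your monotonicity-under-refinement argument goes through and yields the genuine $\uparrow$ and $\downarrow$ statements, not just the supremum/infimum identities.
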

\par
\medskip
%
%
\section{Some properties of finite elements in the vector lattice $\cU(\R^n,\R^m)$}
%
%
In the one-dimensional case, by definition,
$\mathcal{U}(\R):=\cU(\R,\R)$ coincides with the set all functions $f\colon\R\to\R$,
such that $f(0)=0$ and for every (order) bounded set $A\subset\R$ its image $f(A)$ is also a bounded set.
\par
The vector lattice of abstract Uryson operators has a lot of finite elements meaning that $\Phi_1(\cU(\R^n,\R^m))\neq \{0\}$. 
The next proposition shows that $\{0\}\neq\Phi_1(\cU(\R))\neq\cU(\R)$ in the one-dimensional case.
For an arbitrary real function $f$ defined on $\R$ denote
 the set $\{x\in\R\colon f(x)\neq 0\}$ by $\text{supp}(f)$ and called it the {\it support} of $f$.
\begin{prop}\label{fin-1}
The set of all finite elements  $\Phi_{1}(\cU(\R))$ coincides with the set
\[
   \fF(\cU(\R))=\{f\in \cU(\R) \colon
    {\rm supp}(f)\subset[a,b],\; a,b\in\R \}.
\]
Moreover, $\Phi_2(\cU(\R))=\Phi_1(\cU(\R))$.
\end{prop}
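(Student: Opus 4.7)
The plan is to first describe the lattice operations on $\mathcal{U}(\R)$ explicitly. Since disjointness in $\R$ means one of the arguments is zero, the only decompositions $t = y \sqcup z$ are $(t,0)$ and $(0,t)$, so the Riesz--Kantorovich formulas in Theorem~\ref{f} collapse to the pointwise operations $(f\vee g)(t) = \max(f(t),g(t))$, $(f\wedge g)(t)=\min(f(t),g(t))$, and $|f|(t) = |f(t)|$. Consequently, for $f,z \in \mathcal{U}(\R)$ with $z \geq 0$, the condition $|g| \wedge n|f| \leq c_g z$ for all $n\in\N$ is equivalent (by letting $n\to\infty$ pointwise) to $|g(t)| \leq c_g z(t)$ for every $t \in \operatorname{supp}(f)$, with no constraint elsewhere.

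Next I would show $\fF(\cU(\R)) \subseteq \Phi_1(\cU(\R))$. Given $f$ with $\operatorname{supp}(f)\subseteq[a,b]$, observe that $0 \notin \operatorname{supp}(f)$ because $f(0)=0$, and define the majorant
\[
   z(t) = \begin{cases} 1 & \text{if } t \in [a,b]\setminus\{0\}, \\ 0 & \text{otherwise}. \end{cases}
\]
Then $z \in \cU(\R)^+$. For any $g \in \cU(\R)$, the order boundedness of $g$ yields $M_g := \sup\{|g(t)| : t \in [a,b]\} < \infty$, and the reformulation above gives $|g(t)| \leq M_g \cdot z(t)$ on $\operatorname{supp}(f)$, so $c_g := M_g$ works. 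Because $\operatorname{supp}(z) \subseteq [a,b]$, the majorant $z$ itself lies in $\fF(\cU(\R))$, which will give $\Phi_2(\cU(\R))=\Phi_1(\cU(\R))$ once the reverse inclusion is established.

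For the reverse inclusion I would argue by contrapositive: assume $\operatorname{supp}(f)$ is unbounded and rule out any candidate majorant $z \geq 0$. Choose $t_n \in \operatorname{supp}(f)$ with $|t_n|\to\infty$ (passing to a subsequence so that any bounded set contains only finitely many $t_n$), and define
\[
  g(t) = \begin{cases} n\bigl(z(t_n)+1\bigr) & \text{if } t = t_n, \\ 0 & \text{otherwise}. \end{cases}
\]
Then $g(0)=0$ and $g$ maps each bounded set to a finite set, hence $g \in \cU(\R)$. If $z$ were a majorant for $f$, the pointwise reformulation at $t=t_n$ would force $n(z(t_n)+1) = |g(t_n)| \leq c_g z(t_n)$, i.e., $n \leq (c_g-n)z(t_n)$, which fails for every $n > c_g$. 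This contradiction shows $f \notin \Phi_1(\cU(\R))$, completing $\Phi_1(\cU(\R))\subseteq\fF(\cU(\R))$.

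The only step with any real subtlety is the construction of the bad $g$ in the last paragraph: I must ensure $g \in \cU(\R)$, and this is exactly why passing to a subsequence with $|t_n|\to\infty$ is essential — so that $g$ is order bounded (bounded on every order bounded interval of $\R$). Everything else reduces to the pointwise description of the lattice structure on $\cU(\R)$ and elementary estimates.
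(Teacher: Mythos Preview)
Your proof is correct and follows essentially the same strategy as the paper's: an indicator-type majorant for the inclusion $\fF(\cU(\R))\subseteq\Phi_1(\cU(\R))$, and for the converse a bad $g$ supported on a sequence in $\operatorname{supp}(f)$ escaping to infinity that defeats any candidate majorant $z$. Your explicit reduction of the finiteness condition to the pointwise inequality $|g(t)|\le c_g z(t)$ on $\operatorname{supp}(f)$ and your choice $g(t_n)=n(z(t_n)+1)$ are a bit cleaner than the paper's choice $g(x_n)=\exp(z(x_n))$, but the overall approach is the same.
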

\begin{proof}
Fix an arbitrary element $f\in\fF(\cU(\R))$.
Then $\text{supp}(f)\subset[a,b]$ for some $a,b\in\R$.
If $g\in\mathcal{U}(\R)$ then the number $c_g=\sup\{|g(x)|\colon x\in [a,b]\}$ belongs to $\R$.
Define the function
\[
z(x)=\begin{cases}
      1,\quad\text{if $x\in \text{supp}(f)$}     \\
      0,\quad\text{if $x\notin \text{supp}(f)$}.
\end{cases}
\]
Then $z$ is a bounded function on $\R$. Due to $z(0)=0$ and since in $\R$ any orthogonal representation
of some element $x$ is trivial, i.e. consists of $x$ and some zeros, the function $z$ belongs to $\mathcal{U}(\R)$.
For $x\in {\rm supp}(f)$ we have
\[
(|g|\wedge n|f|)(x)\leq|g|(x)\leq c_{g}z(x).
\]
The inequality trivially holds for $x\notin {\rm supp}(f)$. Therefore $f$ is a finite element
in $\cU(\R)$ and $z$ one of its majorants. Hence $\mathfrak{F}(\mathcal{U}(\R))\subset\Phi_{1}(\mathcal{U}(\R))$.
\par
In order to prove the converse assertion take an element $f\in\mathcal{U}(\R)$ such that there exist a
sequence $(x_{n})_{n=1}^{\infty}$ of real numbers $x_n\neq 0$ with the properties $\lim\limits_{n\to\infty}x_{n}=\infty$
and $f(x_{n})\neq 0$ for all $n\in\N$.
Let $f\in\Phi_{1}(\mathcal{U}(\R))$ and let $z$ be a majorant for $f$.
Then we may assume $z(x_{n})>1$ for all $n\in\N$.
By assumption, for every $g\in\mathcal{U}(\R)$ we have
\be\label{f3.1}
\sup\limits_{n}\{|g|\wedge n|f|\}\leq c_{g}\,z \quad \mbox{{\rm for some\;}} \,  c_{g}\in\R_{+},
\ee
where the existence of the supremum is guaranteed by the Dedekind completeness of $\cU(\R)$ (see Theorem \ref{f}).
In particular, for a function
$g\in \cU(\R)$ with
\[
g(x)= \begin{cases} \exp(z(x)),\;     \text{if $x=x_n \; \mbox{for\;} n=1,2,\ldots$}  \\
                    \quad 0, \qquad \quad \text{if $x\in \R,\, x\neq x_n$}                                                 \\
\end{cases}
\]
there exists  $n_{0}\in\N$, such that $g(x_{n})>c_{g}\,z(x_{n})$ for all $n\geq n_{0}$.
Fix $m\in\N$ such that $m|f(x_{n_{0}})|>g(x_{n_{0}})$.
Then
\[
\sup\limits_{n}\{|g|\wedge n|f|\}(x_{n_{0}})\geq(|g|\wedge m|f|)(x_{n_{0}})=
g(x_{n_{0}})> c_g\,z(x_{n_{0}}).
\]
This contradicts to (\ref{f3.1}) and therefore,
    $\fF(\cU(\R))\supset\Phi_{1}(\mathcal{U}(\R))$.
\par
Observe that the majorant $z\in \cU(\R)$ of $f$, constructed in the first part of the proof, 
is such that ${\rm supp}(z)={\rm supp}(f)$. So ${\rm supp}(z) \subset [a,b]$, and by what has
been proved one has
$z\in \Phi_1(\cU(\R))$. Therefore  $\fF(\cU(\R))\subset \Phi_2(\cU(\R)$.
Since $\Phi_2(\cU(\R))\subset \fF(\cU(\R))$ is clear from $\Phi_2(\cU(\R))\subset \Phi_1(\cU(\R))$,
we conclude that any finite element in $\cU(\R)$ is even totally finite.
\end{proof}
Guided by the previous proposition we describe the finite elements in the vector lattice $\cU(\R^n,\R^m)$
(see Example \ref{Ex-1}).
\begin{prop}\label{p2}
For the vector lattice $\cU(\R^n,\R^m)$ the ideal
$\Phi_1\left(\cU(\R^n,\R^m)\right)$
coincides with the set of all operators $T=(T_{i,j})\in\cU(\R^n,\R^m)$ for which each
function $T_{i,j} \;(i=1,\ldots, n;\, j=1,\ldots, m)$ satisfies the conditions $T_{i,j}(0)=0$ and
\be\label{f3.0}
   {\rm supp}(T_{i,j})\subset [a^{(ij)},b^{(ij)}] \; \mbox{ {\rm for some real
    interval} }\; [a^{(ij)},b^{(ij)}].
\ee
In this case $\Phi_1\left(\cU(\R^n,\R^m)\right) = \Phi_2\left(\cU(\R^n,\R^m)\right)$ also holds.
\end{prop}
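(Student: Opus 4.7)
The plan is to reduce the multi-dimensional case to the one-dimensional Proposition~\ref{fin-1} by identifying $\cU(\R^n,\R^m)$ with the product vector lattice $\cU(\R)^{nm}$ via the matrix representation $T\longleftrightarrow(T_{i,j})$ of Example~\ref{Ex-1}, and then applying Proposition~\ref{t3.28} coordinate by coordinate.

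First I would verify that the bijection $T\mapsto(T_{i,j})$ is a vector lattice isomorphism. Linearity and bijectivity are built into Example~\ref{Ex-1}, so the substance is preservation of the order. Evaluating $T$ at an axis vector $x_j e_j$ gives $T(x_j e_j)=(T_{1,j}(x_j),\ldots,T_{m,j}(x_j))$, so $T\geq 0$ in $\cU(\R^n,\R^m)$ forces every $T_{i,j}(x_j)\geq 0$ for all $x_j\in\R$, i.e.\ every $T_{i,j}\geq 0$ in $\cU(\R)$. Conversely, componentwise positivity together with the formula $(T(x))_i=\sum_{j}T_{i,j}(x_j)$ gives $T\geq 0$. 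Hence the identification is an order isomorphism of vector lattices.

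Next I would exhibit the projection band decomposition. For each pair $(i,j)$ the set
\[
B_{(i,j)}=\bigl\{(S_{k,l})\in\cU(\R)^{nm}\colon S_{k,l}=0 \text{ for } (k,l)\neq(i,j)\bigr\}
\]
is a projection band of $\cU(\R)^{nm}$ canonically isomorphic to $\cU(\R)$, and the whole space is the finite direct sum $\bigoplus_{i,j}B_{(i,j)}$. Iterating Proposition~\ref{t3.28} along this decomposition yields
\[
\Phi_{1}\bigl(\cU(\R^n,\R^m)\bigr)=\bigoplus_{i,j}\Phi_{1}(\cU(\R)).
\]
Feeding the characterization of Proposition~\ref{fin-1} into each factor, so that $T_{i,j}\in\Phi_{1}(\cU(\R))$ iff $\mathrm{supp}(T_{i,j})\subset[a^{(ij)},b^{(ij)}]$, reproduces exactly the condition~(\ref{f3.0}).

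For the last claim $\Phi_{1}=\Phi_{2}$, I would not repeat the whole argument: given a finite $T=(T_{i,j})$, the componentwise application of the construction from the proof of Proposition~\ref{fin-1} yields a matrix of indicator-type majorants $Z=(Z_{i,j})$ where each $Z_{i,j}$ has the same bounded support as $T_{i,j}$, hence belongs to $\Phi_{1}(\cU(\R))$; by what has already been shown, $Z$ is then finite in $\cU(\R^n,\R^m)$ itself. The main obstacle is really the first step: one has to make sure the componentwise description of positivity in $\cU(\R^n,\R^m)$ genuinely matches the product-lattice order on $\cU(\R)^{nm}$, since the whole reduction to Proposition~\ref{fin-1} hinges on the compatibility between the lattice operations of $\cU(\R^n,\R^m)$ and the entries of its matrix representation.
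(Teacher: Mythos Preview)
Your approach is correct, but it differs substantially from the paper's own proof.

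The paper proceeds by direct imitation of the one-dimensional argument of Proposition~\ref{fin-1}: it defines the support of an operator in $\cU(\R^n,\R^m)$, constructs an explicit majorant $Z=(Z_{i,j})$ with indicator-type entries, verifies by hand that $Z$ is orthogonally additive, and checks the finiteness inequality pointwise. For the converse it picks a sequence $(x^{(k)})$ escaping every ball with $T(x^{(k)})\neq 0$, builds an exponential-type counterexample $S$, and derives a contradiction with the majorant inequality. In short, the paper redoes Proposition~\ref{fin-1} in $nm$ coordinates simultaneously.

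Your route is more structural: you identify $\cU(\R^n,\R^m)$ with the finite product $\cU(\R)^{nm}$ as vector lattices, decompose the latter into its coordinate projection bands, and invoke Proposition~\ref{t3.28} to transport the finiteness question to each factor, where Proposition~\ref{fin-1} applies verbatim. The crucial verification that the positive cone of $\cU(\R^n,\R^m)$ coincides with the product cone via evaluation at axis vectors $x_j e_j$ is correct, and once this order isomorphism is in hand everything else is formal. Your argument is shorter, avoids repeating the explicit constructions, and makes the subsequent Remark (that $T\in\Phi_1(\cU(\R^n,\R^m))$ iff every $T_{i,j}\in\Phi_1(\cU(\R))$) an immediate byproduct rather than a separate observation. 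The paper's direct argument, on the other hand, is self-contained and does not appeal to the external result Proposition~\ref{t3.28}. For the $\Phi_1=\Phi_2$ claim you could streamline further: since in a finite product lattice a tuple of majorants is a majorant of the tuple (with constant the maximum of the individual constants), the equality $\Phi_2(\cU(\R))=\Phi_1(\cU(\R))$ from Proposition~\ref{fin-1} passes to the product without rebuilding the indicator majorant explicitly.
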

\begin{proof}
The assertion of this proposition is established by a similar argument as for Proposition \ref{fin-1}.
Consider an operator $T\in \cU(\R^n,\R^m)$ such that its constituent functions $T_{i,j}$ have the properties
$T_{i,j}(0)=0$ and (\ref{f3.0}). Then for the interval $[a,b]$ with $a=\min_{ij}a^{(ij)}$ and $b=\max_{ij}b^{(ij)}$
one has  ${\rm supp}(T_{i,j})\subset [a,b]$ for all $i$ and $j$.
The set
\[
{\rm supp}(T)=\{x=(x_1,\ldots,x_n)\in \R^n\colon \exists i\in \{1,\ldots,m\} \; \mbox{ with }\;\sum_{j=1}^nT_{i,j}(x_j)\neq 0 \}
\]
will be called the {\it support} of the operator $T$. It is clear that
${\rm supp}(T)=\{x\in \R^n\colon T(x)\neq 0\}, \; 0\notin {\rm supp}(T)$ and $T(x)\neq 0$ for all
$x\in  {\rm supp}(T)$.
Define the map $Z=(Z_{i,j})\colon \R^n\to \R^m$ by
\[
Z(x)=\left(\begin{array}{ccc}
      Z_{1,1}(x_1) + & \cdots & +Z_{1,n}(x_n)\\
                     & \vdots                \\
      Z_{m,1}(x_1) + & \cdots & +Z_{m,n}(x_n)
     \end{array}\right),
\]
where $Z_{i,j}$ are real bounded functions with $Z_{i,j}(0)=0$ for all $i,j$. \\
Denote the set $\{1,2,\ldots, n\}$ by $N$.
For an arbitrary vector $w\in \R^n$ its support is the set
\[ {\rm supp}(w)=\{j\in N\colon w_j\neq 0\}. \]
In order to show the orthogonal additivity of $Z$ consider $x=u\sqcup v$ with $u$ and $v$ being fragments of $x$.
Then ${\rm supp}(u)\cap {\rm supp}(v)=\emptyset$ and
\[
x_j\,=\, \left\{\begin{array}{ccl}
      u_j,  & \quad & \text{if $j\in {\rm supp}(u)$  }   \\
      v_j,  & \quad & \text{if $j\notin {\rm supp}(u)$}.
\end{array}\right.
\]
Then
\[
Z(u\sqcup v)=\left(\begin{array}{l}
             \sum\limits_{j\in {\rm supp}(u)}Z_{1,j}(u_j) \\
                    \hspace{1.3cm}  \vdots                \\
      \sum\limits_{j\in {\rm supp}(u)}Z_{m,j}(u_j)
     \end{array}\right)\,+\,
              \left(\begin{array}{l}
              \sum\limits_{j\notin {\rm supp}(u)}Z_{1,j}(v_j) \\
                    \hspace{1.3cm}  \vdots                \\
              \sum\limits_{j\notin {\rm supp}(u)}Z_{m,j}(v_j)
     \end{array}\right).
\]
By using that $Z_{i,j}(u_j)=0$ for  $j\notin {\rm supp}(u)$ and
              $Z_{i,j}(v_j)=0$ for  $j\in {\rm supp}(u)$ (for all $i=1,\ldots,m$)
the summation in each of the coordinates of the last two vectors can be extended to the whole set $N$ and hence
one obtains $Z(u\sqcup v)=Z(u)+Z(v)$ and, so $Z\in \cU(\R^n,\R^m)$.

\par
For an arbitrary Uryson operator  $S=(S_{i,j})\in \cU(\R^n,\R^m)$ define the number
\[
c_S=\sup\big\{\sum_{i,j=1}^{m,n}|S_{i,j}(x_j)|\colon x=(x_1,x_2,\ldots,x_n)\in [a,b]\big\}.
\]
Then $c_S\in \R$ and for $x\in{\rm supp}(T)$ one has
\[
     (|S|\wedge n|T|)(x)\leq |S|(x)\leq c_S\,Z(x).
\]
For $x\notin {\rm supp}(T)$ the inequality is also true due to $T(x)=0$ in that case.
So $T$ is a finite element in $\cU(\R^n,\R^m)$ and $Z$ is one of its majorant.
\\
For the converse let $T\in \cU(\R^n,\R^m)$ be such that there is a sequence $(x^{(k)})$
of vectors $0\neq x^{(k)}\in \R^n$ with the properties that $T(x^{(k)})\neq 0$ and
$(x^{(k)})$ leaves any ball\footnote{\,  i.e. $\| x^{(k)}\| \to \infty$.}
in $\R^n$. If $T$ would belong to $\Phi_1(\cU(\R^n,\R^m))$ and $Z$ is a fixed majorant of $T$ then for
any operator $S\in \cU(\R^n,\R^m)$ one has
\begin{equation}\label{f2}
|S|\wedge nT\leq c_S\,Z \, \mbox{ for all }\, n\in \N \,\mbox{ and some number }\, c_S>0.
\end{equation}
In particular, this holds for an operator $0<S=(S_{i,j})\in \cU(\R^n,\R^m)$ with
\[
S_{i,j}(x_j)=\left\{\begin{array}{ccl}
                     \exp(Z_{i,j}(x_j)), & \mbox{ if } & x \in {\rm supp}(T) \\
                            0,           & \mbox{ if } & x\notin {\rm supp}(T)
                    \end{array}\right., \quad x=(x_1,\ldots,x_n).
\]
Then
\[ (S\wedge nT)(x^{(k)})=S(x^{(k)})=  \left(\begin{array}{c}
                                                \sum\limits_{j=1}^n\exp(Z_{1,j}(x^{(k)}_j))\\
                                                   \vdots \\
                                                 \sum\limits_{j=1}^n\exp(Z_{m,j}(x^{(k)}_j))
                                               \end{array}\right).
\]
It is clear that for sufficiently large $k$ the last vector is greater than $c_S\,Z(x^{(k)})$ what is in contradiction to
(\ref{f2}).
\end{proof}
\begin{rem} Actually it is  proved that
\[ T=(T_{i,j})\in \Phi_1(\cU(\R^n,\R^m))\; \mbox{ if and only if }\; T_{i,j}\in \Phi_1(\R) \]
for all  $i=1,\ldots,m;\, j=1,\ldots, n$.
\end{rem}
\par
\medskip
For a band\footnote{\, due to the Dedekind completeness of $F$ any band is a projection band.} 
$H\subset F$ we get a result for the abstract Uryson operators which is similar to Theorem 2 in \cite{Ch-3} for (linear)
regular operators.
\begin{prop}\label{P2}
Let $E,F$ be vector lattices with $F$ Dedekind complete and let $H$ be a band in $F$.
Then $\cU(E,H)$ is a projection band $\cU(E,F)$ and the following equation holds
\begin{eqnarray*}
 \Phi_{1}(\cU(E,H)) & = &\Phi_{1}(\cU(E,F))\cap\cU(E,H).
 \end{eqnarray*}
\end{prop}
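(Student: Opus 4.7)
My plan is to prove the proposition in two stages: first establish that $\cU(E,H)$ is a projection band in $\cU(E,F)$, and then deduce the equality for finite elements by invoking Proposition~\ref{t3.28}.

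For the band assertion, I would first verify that $\cU(E,H)$ is an order ideal of $\cU(E,F)$. Suppose $S\in \cU(E,H)$, $T\in \cU(E,F)$ and $|T|\le |S|$. Since $S$ takes values in $H$ and each $S(y)-S(z)\in H$, the formula $|S|(x)=\sup\{S(y)-S(z)\colon x=y\sqcup z\}$ from Theorem~\ref{f} keeps $|S|(x)$ in $H$, because $H$ is a band (hence order closed) in $F$. Then $|T|(x)\in H$ as $H$ is an ideal, and $|T(x)|\le |T|(x)$ from Theorem~\ref{f}(6) forces $T(x)\in H$ for every $x$. Thus $T\in \cU(E,H)$.

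Next I would show order-closedness: given $0\le T_\al\uparrow T$ in $\cU(E,F)$ with $T_\al\in \cU(E,H)$, I would prove that the supremum is taken pointwise, so that $T(x)=\sup_\al T_\al(x)$ lies in the order-closed set $H$. Define $\widetilde T(x):=\sup_\al T_\al(x)$ in $F$; this exists because $T_\al\le T$ forces $T_\al(x)\le T(x)$ pointwise, and $F$ is Dedekind complete. Since the net $T_\al$ is increasing, for any disjoint $y,z$ with $x=y\sqcup z$ one may interchange the supremum with the orthogonal sum to obtain $\widetilde T(y\sqcup z)=\widetilde T(y)+\widetilde T(z)$, and the bound $0\le\widetilde T\le T$ gives order boundedness. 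Thus $\widetilde T\in \cU(E,F)^+$ dominates every $T_\al$, so $\widetilde T\ge T$, and the reverse inequality is clear. This proves $T(x)=\widetilde T(x)\in H$ and therefore $T\in \cU(E,H)$. Combined with the ideal property, this shows $\cU(E,H)$ is a band in the Dedekind complete lattice $\cU(E,F)$, hence a projection band.

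For the second assertion, $\cU(E,H)$ being a projection band in $\cU(E,F)$ allows me to apply Proposition~\ref{t3.28} with $E$ replaced by $\cU(E,F)$ and $E_0$ by $\cU(E,H)$, immediately yielding
\[
\Phi_1(\cU(E,H))=\Phi_1(\cU(E,F))\cap \cU(E,H).
\]
The main obstacle is the order-closedness step: one needs to be confident that suprema in $\cU(E,F)$ are computed pointwise for increasing nets of positive operators, which requires the interchange of $\sup_\al$ with the orthogonal decomposition $x=y\sqcup z$. This interchange is legitimate because the net is monotone, but it is the one nontrivial lattice-theoretic point and must be stated carefully.
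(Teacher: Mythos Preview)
Your proof is correct and follows the same overall strategy as the paper: first show $\cU(E,H)$ is a projection band in $\cU(E,F)$, then invoke the general fact about finite elements in projection bands (your Proposition~\ref{t3.28} is exactly the result the paper cites from \cite{Ch-2}).

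The one tactical difference worth noting is in the order-closedness step. You prove directly that increasing suprema in $\cU(E,F)$ are computed pointwise, by constructing $\widetilde T(x)=\sup_\alpha T_\alpha(x)$, checking it lies in $\cU(E,F)_+$, and sandwiching. The paper instead exploits the band projection $\pi\colon F\to H$: since $T_\alpha=\pi T_\alpha$ and $S\mapsto \pi S$ is positive and order-preserving on $\cU(E,F)$, the net $\pi T_\alpha$ increases and is bounded above by $\pi T\le T$, while $T=\sup_\alpha T_\alpha\le \sup_\alpha \pi T_\alpha\le \pi T$, forcing $T=\pi T\in\cU(E,H)$. This avoids verifying pointwise suprema altogether. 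Your route is more elementary and self-contained (and in fact yields the useful side result that suprema of increasing nets in $\cU(E,F)$ are pointwise), while the paper's is shorter once one has the projection $\pi$ in hand. Both arguments are sound; the ``nontrivial interchange'' you flag is indeed legitimate for monotone nets, exactly as you justify it.
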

\begin{proof}
Let $\pi\colon F\to H$ be the order projection in $F$ onto $H$.
It is clear that $\cU(E,H)$ is an order ideal in the $\cU(E,F)$.
Fix a net $(T_{\alpha})$ in $\cU_{+}(E,H)$, such that $T_{\alpha}\uparrow T$ for some $T\in \cU(E,F)$.
Then one  has  $T_{\alpha}=\pi T_{\alpha}\uparrow \pi
T\in\mathcal{U}_{+}(E,H)$.
Therefore  $T=\pi T$, i.e. the order ideal $\mathcal{U}(E,H)$
is a band and,  due to the Dedekind completeness of $\mathcal{U}(E,F)$, even a projection band.
Let  $\pi^\star\colon\cU(E,F)\rightarrow\cU(E,H)$ be the related order projection.
Then $\pi^{\star}(T)=\pi T$ holds for every  $T\in\cU(E,F)$. To finish the proof, refer to Theorem~2.11
from \cite{Ch-2}, saying that the finite elements in $\Phi_1(\cU(E,H))$ are exactly those finite elements of 
$\Phi_1(\cU(E,F))$, which belong to $\cU(E,H)$.
\end{proof}
\begin{rem} \label{r2}
By the mentioned result from \cite{Ch-2} there is proved even the equality
$  \pi^{\star}\left(\Phi_1(\cU(E,F)\right)= \Phi_1(\cU(E,H))$.
\end{rem}
\par
\medskip
%
%
\section{Finite elements in $\cU(E,\R)$}
%
%
\begin{definition}
A non-zero element $u$ of a vector lattice $E$ is called
an \textit{atom}, whenever $0 \leq x \leq |u|$, $0 \leq y \leq |u|$ and $x \wedge y = 0$
imply that either $x = 0$ or $y = 0$.
\end{definition}
If $u$ is an atom in $E$ then $\cF_u=\{0, u\}$. 
Note that a non-zero element $u$ of a vector lattice $E$ is called 
\textit{discrete}, if the ideal $I_u$ generated by $u$ in $E$ coincides with the vector subspace generated by $u$ in $E$, 
i.e. if $0\leq x< u$ implies $x=\lambda u$ for some $\lambda\in  \R_+$.  
We need the following properties of atoms.
\begin{prop}[\cite{LZ}, Theorem~26.4] \label{pr}
Let $E$ be  an Archimedean vector lattice. Then the following holds:
\begin{enumerate}
\item[(i)] Atoms and discrete elements are the same.  
\item[(ii)] For any atom $u$ the ideal $I_u$ is a projection band.
\item[(iii)] For any two atoms $u,v$ in $E$, either $u \bot v$, or $v = \lambda u$ for some $0 \neq \lambda \in \mathbb R$.
\end{enumerate}
\end{prop}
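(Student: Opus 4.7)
The plan is to establish the three assertions in order, with the real work going into the harder direction of~(i).

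The easy direction of~(i), discrete $\Rightarrow$ atom, is immediate: if $0\le x,y\le|u|$ are disjoint and $u$ is discrete, then $x=\alpha|u|$, $y=\beta|u|$ with $\alpha,\beta\in\R_+$, and $x\wedge y=\min(\alpha,\beta)|u|=0$ forces $\alpha=0$ or $\beta=0$. For the reverse direction, take $0\le x\le u$ (WLOG $u>0$) and put
\[
\lambda^*=\sup\{\lambda\ge 0\colon\lambda u\le x\}.
\]
Since any $\lambda u\le x\le u$ gives $\lambda\le 1$, one has $\lambda^*\le 1$; applying the Archimedean property to $(\lambda^*-1/n)u\le x$ then yields $\lambda^*u\le x$. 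Writing $y=x-\lambda^*u\ge 0$, the task reduces to showing $y=0$. The atom property is brought to bear on
\[
a=(2y-u)^+,\qquad b=(u-2y)^+,
\]
which are disjoint with $a\vee b=|2y-u|\le u$ (since $0\le 2y\le 2u$), so $a,b\in[0,u]$. The atom property forces $a=0$ or $b=0$; the case $b=0$ yields $y\ge u/2$ and hence $x\ge(\lambda^*+\tfrac12)u$, contradicting maximality, so $y\le u/2$. Iterating the same device with $2y,4y,\dots$ in place of $y$ gives $y\le u/2^n$ for every $n\in\N$, and the Archimedean property then forces $y=0$.

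Parts~(ii) and~(iii) follow from~(i) without further difficulty. For~(ii), since $I_u=\R u$, every $x\ge 0$ splits as $x=\alpha u+(x-\alpha u)$ with $\alpha=\sup\{\lambda\ge 0\colon\lambda u\le x\}$ (finite by Archimedean); if $(x-\alpha u)\wedge u>0$, then~(i) writes this meet as $\mu u$ for some $\mu>0$, contradicting the choice of $\alpha$. Passing to $x=x^+-x^-$ decomposes every $x\in E$, proving $I_u$ is a projection band. For~(iii), if atoms $u,v$ satisfy $|u|\wedge|v|>0$, then~(i) applied to $w:=|u|\wedge|v|\in[0,|u|]\cap[0,|v|]$ gives $w=\alpha|u|=\beta|v|$ with $\alpha,\beta>0$, so $v\in\R u$.

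The main obstacle is the atom $\Rightarrow$ discrete step: one needs to produce a genuinely disjoint pair inside $[0,u]$ out of $(y,u-y)$, which is not itself disjoint in general. The trick of replacing this pair with $(2y-u)^+$ and $(u-2y)^+$, combined with the dyadic iteration and the Archimedean axiom, is what makes the argument close.
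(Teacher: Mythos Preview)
The paper does not supply its own proof of this proposition; it is simply quoted from Luxemburg--Zaanen \cite{LZ}, Theorem~26.4, and used as a tool. So there is no in-paper argument to compare your attempt against.

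Your proof is essentially correct and self-contained. The dyadic trick in the atom $\Rightarrow$ discrete direction of~(i)---replacing the non-disjoint pair $(y,u-y)$ by the disjoint pair $\bigl((2y-u)^+,(u-2y)^+\bigr)$ and iterating---is the standard device, and the use of the Archimedean property both to secure $\lambda^\ast u\le x$ and to conclude $y=0$ from $y\le u/2^n$ is clean. In the iteration you should perhaps make explicit that the $b=0$ branch at the $k$-th step still contradicts the maximality of $\lambda^\ast$ (it yields $y\ge u/2^k$, hence $(\lambda^\ast+2^{-k})u\le x$), but you clearly intend this. Parts~(ii) and~(iii) are then correctly derived from~(i).

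One small gap in~(iii): from $|u|\wedge|v|=\alpha|u|=\beta|v|$ with $\alpha,\beta>0$ you conclude only that $|v|\in\R|u|$, not yet $v\in\R u$. To close this, note that any atom $w$ satisfies $w=\pm|w|$, since $w^+\wedge w^-=0$ with $w^+,w^-\in[0,|w|]$ forces one of them to vanish by the atom property; applying this to both $u$ and $v$ gives $v=\lambda u$ with $\lambda\ne 0$.
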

\begin{definition}
An Archimedean vector lattice $E$ is said to be {\it atomic}\footnote{\, or discrete.} 
if for each $0 < x\in E$ there is an atom $u\in E$ satisfying $0<u\leq x$. \\
A vector lattice is said to be {\it atomless} provided it has no atoms.
\end{definition}
Equivalently (see \c{AlBu}), $E$ is atomic, if and only if there is a collection $(u_i)_{i \in I}$ of atoms in $E$, 
such that $u_i \bot u_j$ for $i \neq j$ and for every $x \in E$
if $|x| \wedge u_i = 0$ for each $i \in I$ then $x = 0$. Such a collection is called a 
{\it generating disjoint collection of atoms}.  
\par
By Proposition~\ref{pr}, a generating collection of atoms in an  atomic vector lattice is unique,
up to a permutation and nonzero multiples. 
\par     
\smallskip
Let $E$ be a vector lattice. Consider any maximal collection of atoms $(u_i)_{i \in I}$ in $E$, the existence of which
is guaranteed by Proposition~\ref{pr} and by applying Zorn's Lemma. Let $E_0$ be the minimal band containing $u_i$ for all $i \in I$.
If $E_0$ is a projection band then $E = E_0 \oplus E_1$,
where $E_1 = E_0^d$ is the disjoint complement to $E_0$ in $E$, which is an atomless sublattice of $E$.
So, we obtain the following assertion.
\begin{prop} \label{pr-1}
Any vector lattice $E$ with the projection property\footnote{\, then $E$ is Archimedean.} has a decomposition into mutually complemented bands $E = E_0 \oplus E_1$, 
where $E_0$ is an atomic vector lattice and $E_1$ is an atomless vector lattice. 
\end{prop}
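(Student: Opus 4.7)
The plan is to fill in the construction sketched in the paragraph immediately preceding the proposition. Consider the family $\mathcal{A}$ of all collections of pairwise disjoint atoms in $E$, partially ordered by inclusion. Using Proposition~\ref{pr}(iii), any atom is either orthogonal to a given atom or a nonzero scalar multiple of it, so by choosing one representative per ray we may assume the atoms in a collection are pairwise disjoint. Since the union of a chain in $\mathcal{A}$ is again a collection of pairwise disjoint atoms, Zorn's Lemma yields a maximal element $(u_i)_{i\in I}$. Let $E_0$ be the band generated by $\{u_i\colon i\in I\}$, i.e. $E_0=\{u_i\}^{dd}$; by the assumed projection property $E_0$ is a projection band, so $E=E_0\oplus E_1$ with $E_1=E_0^d$.

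Next I would show that $E_0$ is atomic. Take any $0<x\in E_0$. Since $x\in E_0=\{u_i\}^{dd}$, $x$ cannot be disjoint from all the $u_i$, for otherwise $x\in E_0\cap E_0^d=\{0\}$. Hence there is $i\in I$ with $x\wedge u_i>0$. By Proposition~\ref{pr}(i), $u_i$ is discrete, and therefore $x\wedge u_i=\lambda u_i$ for some $\lambda>0$. The element $\lambda u_i$ is an atom below $x$, which shows that $E_0$ is atomic.

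Finally, I would argue by contradiction that $E_1$ is atomless. Suppose $v$ is an atom in the sublattice $E_1$. Because $E_1$ is an order ideal in $E$ and the lattice operations of $E_1$ coincide with those of $E$, every pair of elements of $E$ with $0\le x,y\le v$ automatically lies in $E_1$ and has the same meet there as in $E$; consequently $v$ is also an atom in $E$. On the other hand $v\in E_1=E_0^d$, so $v\bot u_i$ for all $i\in I$. Then $(u_i)_{i\in I}\cup\{v\}$ is a strictly larger collection of pairwise disjoint atoms in $E$, contradicting the maximality of $(u_i)_{i\in I}$.

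The main technical point to handle carefully will be the passage between atoms of $E_1$ and atoms of $E$; this hinges on $E_1$ being an ideal (so that $0\le x\le v$ in $E$ forces $x\in E_1$) together with $E_1$ being a sublattice (so that the infimum computed in $E_1$ agrees with that computed in $E$). The projection property is used only once, namely to split $E_0$ off as a direct summand; everything else is purely order-theoretic.
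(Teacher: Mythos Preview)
Your proof is correct and follows exactly the construction the paper sketches in the paragraph immediately preceding the proposition: take a maximal disjoint family of atoms via Zorn's Lemma, let $E_0$ be the band it generates, and use the projection property to split off $E_1=E_0^d$. The only difference is that you actually verify the two claims the paper leaves implicit --- that $E_0$ is atomic and that $E_1$ is atomless --- and your arguments for both (using discreteness of $u_i$ for the former and maximality together with $E_1$ being an ideal for the latter) are the standard ones and are sound.
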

The following theorem is the first main results of this section and deals with finite elements
in atomic vector lattices.
\begin{thm}\label{op-3}
Let $E$ be an atomic vector lattice and $\phi\in\Phi_{1}(\cU(E,\R))$.
Then there exists only a finite set $\{e_{1},\dots, e_{n}\}$ of the mutually disjoint atoms in $E$, such
that $\varphi(e_{i})\neq 0$ for $i=1,\ldots,n$.
\end{thm}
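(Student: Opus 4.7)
Following the strategy of Proposition~\ref{fin-1}, the proof will be by contradiction: suppose there exists an infinite sequence $(e_n)_{n\in\N}$ of mutually disjoint atoms in $E$ (which we may assume positive) with $\phi(e_n)\ne 0$, fix a positive majorant $z$ of $\phi$, and construct a single $g\in\cU(E,\R)$ whose values at the $e_n$'s grow unboundedly compared to $z(e_n)$, thereby violating the defining inequality of finiteness.

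Taking $g=\phi$ in the definition of majorant yields $|\phi|\leq c_0 z$, so $z(e_n)\geq |\phi(e_n)|/c_0>0$. Since an atom $u$ has only $\{0,u\}$ as fragments, all partitions of $u$ entering Theorems~\ref{f}(5) and~\ref{fjjjjjgg}(1) are trivial, so for any $S,T\in\cU(E,\R)$,
\[
|T|(e_n)=|T(e_n)|,\qquad (|S|\wedge k|T|)(e_n)=|S(e_n)|\wedge k|T(e_n)|.
\]
With $T=\phi$ and $|\phi(e_n)|>0$, the supremum in $k$ of the right-hand side equals $|g(e_n)|$, so the majorant inequality $\sup_k(|g|\wedge k|\phi|)\leq c_g z$ evaluated at $e_n$ yields the atomic bound
\[
|g(e_n)|\leq c_g\,z(e_n)\qquad\bigl(g\in\cU(E,\R),\ n\in\N\bigr).
\]

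The main task — and the main obstacle — is to produce $g\in\cU(E,\R)$ violating this bound. By Proposition~\ref{pr}(ii) each $I_{e_n}=\R e_n$ is a projection band with band projection $P_n$; writing $P_nx=\lambda_n(x)e_n$, the formula $g_n(x):=\alpha_n\,\mathbf{1}[\lambda_n(x)=1]$ defines an element of the projection band $\cU_{e_n}\subset\cU(E,\R)$: orthogonal additivity holds because disjointness in $E$ forces one of the two $e_n$-scalars of disjoint elements to vanish, and order boundedness is immediate since $0\leq g_n\leq\alpha_n$. The $g_n$ are pairwise disjoint and $g_n(e_m)=\alpha_n\delta_{nm}$. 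Two scenarios are then distinguished for assembling the $g_n$ into a single $g$. If no infinite sub-sequence of $(e_n)$ is order bounded in $E$, every order interval of $E$ contains only finitely many $e_n$, so the pointwise sum $g(x):=\sum_n g_n(x)$ is automatically finite and order bounded, and the choice $\alpha_n=nz(e_n)+1$ gives $g(e_n)/z(e_n)\geq n$. Otherwise some sub-sequence $(e_{n_k})$ satisfies $e_{n_k}\leq v$ for a single $v\in E$; then Theorem~\ref{fjjjjjgg}(3) combined with the disjointness of the $e_{n_k}$ forces $\sum_k|T(e_{n_k})|\leq |T|(v)<\infty$ for every $T\in\cU(E,\R)$, in particular $z(e_{n_k})\to 0$ absolutely summably; after passing to a further sub-sequence with $z(e_{n_k})\leq 4^{-k}$ and taking $\alpha_{n_k}=2^{-k}$, $\alpha_n=0$ otherwise, the $(\alpha_n)$ is summable, so $g=\sum_n g_n$ satisfies $0\leq g\leq 1$, lies in $\cU(E,\R)$, and achieves $g(e_{n_k})/z(e_{n_k})\geq 2^k$. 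In either case $|g(e_n)|/z(e_n)\to\infty$ along a sub-sequence, contradicting the atomic bound and completing the proof.
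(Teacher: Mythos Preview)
Your argument is sound in outline and, once a small slip is repaired, complete. The issue is in Case~B: the inequality $\sum_k|T(e_{n_k})|\leq |T|(v)$ does not follow from Theorem~\ref{fjjjjjgg}(3) as written, because positive abstract Uryson functionals are not monotone and the atoms $e_{n_k}$, though dominated by $v$, need not be \emph{fragments} of $v$ (for instance, in $\ell^\infty$ with $v=(2,2,\ldots)$ no standard unit vector is a fragment of $v$). What Theorem~\ref{fjjjjjgg}(3) actually gives is $\sum_{k\leq K}|T(e_{n_k})|\leq |T|(s_K)$ for $s_K=\bigsqcup_{k\leq K}e_{n_k}\in[0,v]$; now the order boundedness of $|T|\in\cU(E,\R)$ yields $|T|(s_K)\leq\sup_{[0,v]}|T|<\infty$, and your conclusion $\sum_k z(e_{n_k})<\infty$ follows. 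With this correction the proof goes through.

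Your route differs from the paper's in several instructive ways. The paper obtains the atomic bound $T(e_n)\leq c_T\psi(e_n)$ by invoking an external formula for the band projection $\pi_\varphi$ from~\cite{PW}; you reach the same inequality by the elementary observation that an atom has only the trivial partitions $\{0,e_n\}$, so the lattice operations in $\cU(E,\R)$ evaluate pointwise at $e_n$. The paper then assumes, by a brief and not fully transparent ``without loss of generality'', that $\sum_n\psi(e_n)<\infty$, and simply \emph{asserts} the existence of a test functional $T$ with prescribed values $T(e_k)$. You instead split into two cases according to whether the atoms admit an order-bounded subsequence, and in each case you build the violating functional explicitly from the band projections onto $I_{e_n}$. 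Your approach is thus more self-contained (no appeal to~\cite{PW}) and more constructive, at the modest cost of a case distinction; the paper's is shorter but leans on outside machinery and leaves the existence of the test functional to the reader.
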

\begin{proof}
If $E$ is a finite dimensional vector lattice then $E$ is isomorphic to $\R^{k}$ for some $k\in\N$
and  $\Phi_{1}(\cU(\R^k,\R))\neq \{0\}$ by Proposition~\ref{p2}. Then the coordinate vectors
\[  e^{(i)}=(0,\ldots,0,\overset{(i)}1,0,\ldots,0),\quad i=1,\ldots,k  \] 
are mutually disjoint atoms in $\R^k$.
Obviously, among them for each $0\neq \phi\in \Phi_1(\cU(\R^k,\R)$ there are some vectors,
on which the functional $\phi$ does not vanish.
\par
Let be $E$  an infinite-dimensional atomic vector lattice $E$. 
Let be  $\varphi\in \Phi_{1}(\cU(E,\R))$, $\varphi>0$ with a fixed positive majorant $\psi$.
Assume that for $\phi$ there exists an infinite set of mutually disjoint atoms $e_{n}\in E$, $n\in\N$ such
that $\varphi(e_{n})>0$ for every $n\in\N$.
Without restriction of generality\footnote{\, Otherwise replace $\phi$ by an element
with appropriate smaller values for $\phi(e_n)$.}\label{endlich} we may  assume
$\sum\limits_{n=1}^{\infty}\psi(e_{n})<\infty$.
For arbitrary $T\in\cU_+(E,\R)$  there exists a number $c_{T}>0$ such that
$ (T\wedge n\varphi)x\leq c_T\psi(x)\; \mbox{ for every }\; n\in\N \mbox{ and } x\in E, $
what implies $(\pi_{\varphi}T)x\leq c_{T}\psi(x)$.
By applying the formula
\be\label{f48}
(\pi_{\varphi}T)x=\sup\limits_{\varepsilon>0}\,
           \inf\limits_{y\in\cF_x}\{Ty\colon\varphi(x-y)\leq\varepsilon \varphi(x)\}.
\ee
(which was proved for any $x\in E$ in \cite{PW}, Formula (3.8))
to the atom $e_n$ and by taking into account that,  due to $\phi(0)=0$ and $\cF_{e_n} =\{0, e_n\}$,
the element $y=e_n$ is the only feasible in formula (\ref{f48}) (applied to $e_n$) we get
\[
(\pi_{\varphi}T)e_{n}=
\sup\limits_{\varepsilon>0}\, \inf\limits_{y\in\cF_{e_n}}\{Ty\colon\varphi(e_{n}-y)\leq\varepsilon
                     \varphi(e_{n})\} = Te_{n}.
\]
Therefore
 \be\label{f5.1}
 Te_{n}\leq c_{T}\psi(e_{n})\; \mbox{ for every }\; n\in\N
 \ee
 and so, $\sum\limits_{n=1}^{\infty}Te_{n}<\infty$ for each $T\in\cU(E,\R)$.
For every $n\in \N$ choose a natural number $k_n\in\N$ such that $\psi(e_{k_{n}})<\frac{1}{(n+1)^{4}}$ and
define numbers $\beta_{k_n}$ satisfying the condition $\frac{1}{(n+1)^{3}}<\beta_{k_n} <\frac{1}{(n+1)^{2}}$.
Take now a functional $T\in\cU_{+}(E,\R)$ such that
\[
Te_{k}=\left\{\begin{array}{ccl}
                   \beta_{k_n}, & \mbox{ if } & k=k_n   \\
                   \psi(e_{k}), & \mbox{ if } & k\neq k_n
                    \end{array}\right. \quad \mbox{for\;} \; k=1,2,\ldots \,.
\]
It is clear that $\sum\limits_{k=1}^{\infty}Te_{k}<\infty$.
Fix $n_{0}\in\N$ with $c_{T}<n_{0}$, where $c_T$ is the constant number for the functional $T$ one has
according to the finiteness of $\phi$. Then
\[
c_{T}\psi(e_{k_{n_{0}}})<n_{0}\psi(e_{k_{n_{0}}})<\frac{1}{(n_{0}+1)^{3}}<\beta_{k_{n_{0}}}=Te_{k_{n_{0}}}.
\]
This is a contradiction to (\ref{f5.1}).
\end{proof}
Our aim now is to establish that for an atomless vector lattice the band of $\sigma$-laterally continuous
abstract Uryson functionals possesses only the trivial finite element.
\begin{definition}
A sequence $(x_{n})_{n\in\N}$ in a vector lattice $E$ is said
to be {\it laterally converging} to $x \in E$ if $x_{n} \sqsubseteq x_m \sqsubseteq x$ for all $n<m$
and $x_n \overset{\rm (o)}\longrightarrow x$.
In this case we write 
$x_n \overset{\rm lat}\longrightarrow x$.
For positive elements $x_n$ and $x$ the notion $x_n \overset{\rm lat}\longrightarrow x$
means that $x_n\in \cF_x$, 
$x_n\uparrow x$ and $x_n \overset{\rm lat}\longrightarrow x$.
\end{definition}
\begin{definition}
Let $E, F$ be  vector lattices. An orthogonally additive operator $T\colon E\to F$ is called 
{\it $\sigma$-laterally continuous} if $x_n \overset{\rm lat}\longrightarrow x$ 
implies 
$Tx_n \overset{\rm (o)}\longrightarrow Tx$.
The vector space  of all $\sigma$-laterally continuous
abstract Uryson operators from $E$ to $F$ is denoted by 
$\mathcal{U}_{\sigma c}(E,F)$.
\end{definition}
It turns out that $\mathcal{U}_{\sigma c}(E,F)$ is
a projection bands in $\mathcal{U}(E,F)$ (\cite{Maz-1}, Proposition 3.8).
We need the following auxiliary lemma.
\begin{lemma}{\label{op-4001}}
Let $E$ be an atomless vector lattice, $\varphi\in\cU_{\sigma c}(E,\R)$  and $\phi(x)>0$ for some vector
$x\in E$. Then there exists a sequence  $(x_{n})_{n\in\N}$ of mutually disjoint
fragments of $x$, such that $\phi(x_{n})>0$, for every $n\in\N$.
\end{lemma}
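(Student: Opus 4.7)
The plan is to argue by contradiction. First, reduce to the case $\phi\geq0$. Since $\cU_{\sigma c}(E,\R)$ is a projection band in $\cU(E,\R)$, the component $\phi^+$ lies in $\cU_{\sigma c}(E,\R)$; by Theorem \ref{f}(3) it satisfies $\phi^+(x)=\sup\{\phi(y):y\sqsubseteq x\}\geq\phi(x)>0$. A disjoint sequence of fragments of $x$ on which $\phi^+$ is strictly positive yields, via the same sup formula (inside each term), fragments that are still mutually disjoint and on which $\phi$ itself is strictly positive; so it is enough to prove the conclusion for $\phi^+$.

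Suppose, for contradiction, no such sequence exists. By Zorn's Lemma, pick a maximal disjoint family $\{u_1,\dots,u_n\}$ of fragments of $x$ with $\phi(u_i)>0$ (which exists and is finite under our hypothesis). Maximality forbids any split $u_i=v\sqcup(u_i-v)$ with both $\phi(v)>0$ and $\phi(u_i-v)>0$; combined with $\phi\geq0$ and orthogonal additivity, this forces $\phi|_{\cF_{u_i}}$ to take only the two values $0$ and $\phi(u_i)$. Fix $u=u_1$; on $\cF_u$ the functional $\phi$ is thus ``irreducible'' in this two-valued sense.

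Using atomlessness of $E$, I would build inductively a lateral-increasing sequence $(w_k)_{k\geq 0}\subset\cF_u$ with $\phi(w_k)=0$: set $w_0=0$; given $w_k$, the residue $r_k=u-w_k$ satisfies $\phi(r_k)=\phi(u)>0$ and so is nonzero, so atomlessness supplies a nontrivial split $r_k=a_k\sqcup b_k$; by the dichotomy exactly one of $\phi(a_k),\phi(b_k)$ equals $0$, and choosing $b_k$ to be that one, $w_{k+1}=w_k+b_k$ extends the sequence with $\phi(w_{k+1})=0$. By refining each partition (iteratively splitting $r_k$ into finer pieces of which, by the dichotomy, exactly one carries the mass $\phi(u)$ and all others are $\phi$-null, then pooling the null pieces into $b_k$) one arranges that the residues $r_k$ descend to $0$ in the order of $E$, so that $w_k\uparrow u$ in order. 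Then $\sigma$-lateral continuity yields $\phi(u)=\lim_k\phi(w_k)=0$, contradicting $\phi(u)=\phi(u_1)>0$.

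The main obstacle is the last arrangement: using atomlessness of $E$ to force the residues $r_k$ to actually order-shrink to $0$ while every one of them still carries the full $\phi$-mass $\phi(u)$. This is where atomlessness of $E$ and $\sigma$-lateral continuity of $\phi$ genuinely interact---atomlessness guarantees room to make the single ``$\phi$-supporting'' piece of each refinement order-arbitrarily small, and $\sigma$-lateral continuity then converts order convergence of the $w_k$ into the required scalar contradiction. Pinning down this step carefully is the delicate part of the argument.
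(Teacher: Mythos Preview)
Your strategy coincides with the paper's: reduce to a fragment $u$ on which $\varphi$ is two-valued (taking only $0$ and $\varphi(u)$), build a laterally increasing sequence of $\varphi$-null fragments $w_k\sqsubseteq u$, and use $\sigma$-lateral continuity to force $\varphi(u)=0$. You are in fact more careful than the paper on two counts---you reduce explicitly to $\varphi\geq 0$ (the paper tacitly assumes this) and you argue \emph{towards} the two-valued dichotomy rather than simply positing it---but there is a slip in the latter: Zorn gives maximality by \emph{inclusion}, which does not preclude a split $u_i=v\sqcup w$ with $\varphi(v),\varphi(w)>0$, since $\{v,w,u_2,\dots,u_n\}$ does not contain the original family. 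The clean fix is direct recursion: if every fragment $y$ with $\varphi(y)>0$ can be so split, recursively splitting one piece already manufactures an infinite disjoint sequence; otherwise some $u$ with $\varphi(u)>0$ has the two-valued property, and you proceed from there.

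The genuine gap is the convergence step you yourself flag. Atomlessness of $E$ only says no nonzero element is an atom; it does \emph{not} guarantee that each nonzero element has a nontrivial fragment, let alone that the residues $r_k$ can be forced to order-shrink to $0$. (For instance $E=C[0,1]$ is atomless, yet the constant function $\mathbf 1$ has only the trivial fragments $0$ and $\mathbf 1$.) Your claim that ``atomlessness guarantees room to make the single $\varphi$-supporting piece order-arbitrarily small'' is exactly what is unproved. The paper's own proof asserts the corresponding step---that the constructed partial sums laterally converge to $x$---without justification as well, so you are at parity with the paper here; but neither argument closes this gap as written, and an additional hypothesis on $E$ (such as the principal projection property, which is present in the paper's downstream application) appears to be needed for the step to go through.
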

\begin{proof}
Assume that for every fragments $x', x''$ of $x$ with $x'\bot x''$, we have $\phi(x')=0$ and
$\phi(x'')=\phi(x)$. 
Put\footnote{\, Since $E$ is atomless there are nontrivial (i.e. different from $0$ and $x$) elements in $\cF_x$.}   
$x_1=x'$ and
consider in the next step the element $x''$.
By repeating the procedure we construct a sequence $(x_n)_{n=1}^{\infty}$ of mutually disjoint
fragments of $x$ such that
$x=\bigsqcup\limits_{n=1}^{\infty}x_{n}$ and $u_{n}=\bigsqcup\limits_{i=1}^{n}x_{i}\overset{\rm lat}\longrightarrow x$.
However $\phi(u_{n})=0$ for each $n\in \N$, what
is a contradiction to the fact that $\varphi$ belongs to $\in\cU_{\sigma c}(E,\R)$.
\end{proof}
Now we deal with lateral ideals in vector lattices.
\begin{definition}\label{def:adm}
A subset $D$ of a vector lattice $E$ is called a \it{lateral ideal} if the following
conditions\footnote{\,that means, $D$ is saturated in the sense of (i) and (ii).} hold:
\begin{enumerate}
\item[(i)] if $x\in D$ then $y\in D$ for every $y\in\mathcal{F}_{x}$,
\item[(ii)] if $x,y\in D$, $x\bot y$ then $x+y\in D$.
\end{enumerate}
\end{definition}
\begin{example}\label{adm-1}
Let $E$ be a vector lattice. Every order ideal in $E$ is a lateral ideal.
\end{example}
\begin{example}\label{adm-2}
Let $E$ be a vector lattice and $x\in E$. Then $\cF_x$  is a lateral ideal (see \c{BAP}, Lemma 3.5).
\end{example}
\par
\begin{lemma}\label{Gum-2}
Let be $E$ be a vector lattice and $\mathcal{D}=(D_n)_{n\in\N}$ a sequence of mutually
disjoint lateral ideals in $E$. Then the set
\[
L(\mathcal{D}):=\Big\{\bigsqcup\limits_{i=1}^{k}x_{i}\colon x_{i}\in D_{n_{i}},\,1\leq i\leq k,\,k\in\N\Big\}
\]
is also a lateral ideal.  
\end{lemma}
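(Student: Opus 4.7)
The plan is to verify directly the two conditions (i) and (ii) of Definition~\ref{def:adm} for $L(\mathcal{D})$. It is worth noting in advance that the mutual disjointness of the family $\mathcal{D}$ will not enter the verification explicitly; it is already encoded in the disjoint sum notation used to represent elements of $L(\mathcal{D})$.

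For condition (i), I would take an arbitrary element $x = \bigsqcup_{i=1}^{k} x_i \in L(\mathcal{D})$ with $x_i \in D_{n_i}$ and a fragment $y \in \mathcal{F}_x$, and invoke the standard decomposition of fragments of a finite disjoint sum: every $y \in \mathcal{F}_{\bigsqcup_{i=1}^{k} x_i}$ admits a representation $y = \bigsqcup_{i=1}^{k} y_i$ with $y_i \in \mathcal{F}_{x_i}$. In the positive case this is obtained by setting $y_i := y \wedge x_i$; the Riesz decomposition property yields $\sum_{i} y_i = y$, while $y_i \leq y$ and $x_i - y_i \leq x - y$ give $y_i \wedge (x_i - y_i) \leq y \wedge (x-y) = 0$, so each $y_i$ is a fragment of $x_i$. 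The general (signed) case reduces to the positive one through the positive and negative parts. Once this decomposition is available, property (i) of each $D_{n_i}$ provides $y_i \in D_{n_i}$, and therefore $y \in L(\mathcal{D})$.

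For condition (ii), I would pick $x, y \in L(\mathcal{D})$ with $x \perp y$ and write $x = \bigsqcup_{i=1}^{k} x_i$ with $x_i \in D_{n_i}$ and $y = \bigsqcup_{j=1}^{\ell} y_j$ with $y_j \in D_{m_j}$. Since $|x_i| \leq |x|$ and $|y_j| \leq |y|$ (as $x_i, y_j$ are fragments), the disjointness $x \perp y$ propagates in the form $|x_i| \wedge |y_j| \leq |x| \wedge |y| = 0$. Combined with the disjointness within each of the two lists, the merged family $\{x_1, \ldots, x_k, y_1, \ldots, y_\ell\}$ is pairwise disjoint, so $x + y = x_1 \sqcup \cdots \sqcup x_k \sqcup y_1 \sqcup \cdots \sqcup y_\ell$ is manifestly an element of $L(\mathcal{D})$. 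The only genuinely nontrivial ingredient is the fragment decomposition lemma invoked in the verification of (i); the remainder is bookkeeping on disjoint families and on the defining properties of each individual $D_{n_i}$.
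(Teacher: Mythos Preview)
Your proof is correct and follows essentially the same approach as the paper: condition (ii) by merging the two disjoint lists, and condition (i) by decomposing a fragment of a disjoint sum into fragments of its summands via the Riesz decomposition property. The paper simply cites the Riesz decomposition property for the latter step, whereas you spell out the formula $y_i = y \wedge x_i$ in the positive case; otherwise the arguments coincide.
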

\begin{proof}
Take arbitrary elements $x,y\in L(\mathcal{D})$, such that $x\bot y$. 
Then
\begin{eqnarray*}
     x & = & \bigsqcup\limits_{i=1}^{k}x_{i}  \;\mbox{ for }\; x_{i}\in D_{n_{i}}, \qquad \;
\,y=\bigsqcup\limits_{j=1}^m y_{j}\;\mbox{ for } \;y_{j}\in D_{n_j},\\
 x_{i} & \bot & y_{j}  \;\mbox{ for }\; 1\leq i\leq k,\,1\leq j\leq m\quad\mbox{and} \\
 x+y   & = &  \bigsqcup\limits_{r=1}^{k+m}z_{r}, \;\mbox{where}\;
          z_{r}= \left\{ \begin{array}{ccl}
                         x_{r},  & \mbox{if } & 1\leq r\leq k  \\
                         y_{r-k},& \mbox{if } & k  <  r\leq k+m.
                         \end{array}\right.
\end{eqnarray*}
Hence the condition (ii) from Definition~\ref{def:adm} is proved for $L(\mathcal{D})$.
Now, let $x\in L(\mathcal{D})$ and $y\in\mathcal{F}_{x}$.
Then $x=\bigsqcup\limits_{i=1}^{k}x_{i}$ with $x_{i}\in D_{n_{i}}$.
By the Riesz decomposition property every $x_{i}$ has a decomposition
$x_{i}=y_{i}\sqcup z_{i}$, where $y=\bigsqcup\limits_{i=1}^{k}y_{i}$ and
$y_{i}$ belongs to $D_{n_{i}}$ due to $y_i\sqsubseteq x_i\in D_{n_{i}}$ for $i=1,\ldots,k$. 
So, the condition (i) from Definition~\ref{def:adm}
is also shown.
\end{proof}
\par
\smallskip
The following extension property of positive orthogonally additive operators was proved in \c{Gum}.
\begin{thm}[\cite{Gum}, Theorem 1]\label{Gum}
Let $E,F$ be  vector lattices with $F$ Dedekind complete and $D$ a lateral ideal in $E$.
Let $T\colon D\to F$ be a positive orthogonally additive operator such that the set
$T(D)$ is order bounded in $F$.
Then there exists an operator $\widetilde{T}_{D}\in\mathcal{U}_{+}(E,F)$ with
$Tx=\widetilde{T}_{D}x$ for every $x\in D$.
\end{thm}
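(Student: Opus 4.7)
The plan is to define the extension by a lattice analogue of the Kantorovich supremum construction, namely
\[
\widetilde T_D(x) \;:=\; \sup\bigl\{T(y)\colon y\in\cF_x\cap D\bigr\}, \qquad x\in E.
\]
First I would check that the formula is well posed. The set on the right is non\-empty because $0\in\cF_x$ for every $x\in E$ and $0\in D$ (take any $y\in D$ and apply property~(i) to the fragment $0\sqsubseteq y$), while the order-boundedness hypothesis provides an $M\in F_+$ with $T(D)\subseteq[0,M]$. Dedekind completeness of $F$ then supplies the supremum and delivers at once the uniform bound $0\leq\widetilde T_D(x)\leq M$ for all $x\in E$, so $\widetilde T_D$ is automatically positive and its range is order bounded; hence order boundedness of $\widetilde T_D$ as a map on $E$ is trivial.

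Next I would verify that $\widetilde T_D=T$ on $D$. For $x\in D$ the element $x$ itself belongs to $\cF_x\cap D$, giving $\widetilde T_D(x)\geq T(x)$. For the reverse, any $y\in\cF_x\cap D$ yields the decomposition $x=y\sqcup(x-y)$ with $x-y\in\cF_x\subseteq D$ by property~(i); orthogonal additivity of $T$ and positivity then give $T(x)=T(y)+T(x-y)\geq T(y)$, so taking the supremum produces the matching bound.

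The core of the argument is orthogonal additivity of $\widetilde T_D$. Fix $x,x'\in E$ with $x\bot x'$. For the inequality $\widetilde T_D(x)+\widetilde T_D(x')\leq\widetilde T_D(x+x')$ take arbitrary $y\in\cF_x\cap D$ and $y'\in\cF_{x'}\cap D$; since $|y|\leq|x|$ and $|y'|\leq|x'|$ the disjointness $x\bot x'$ forces $y\bot y'$, so by property~(ii) the element $y\sqcup y'$ lies in $D$. A short computation using that $|y|,|x-y|$ belong to the band generated by $x$ while $|y'|,|x'-y'|$ belong to the disjoint band generated by $x'$ shows that lattice operations on sums split over disjoint bands, giving $(|y|+|y'|)\wedge(|x-y|+|x'-y'|)=(|y|\wedge|x-y|)+(|y'|\wedge|x'-y'|)=0$, so $y\sqcup y'\in\cF_{x+x'}\cap D$ and $T(y\sqcup y')=T(y)+T(y')\leq\widetilde T_D(x+x')$. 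Taking suprema in $y$ and $y'$ delivers the inequality. For the reverse direction each $z\in\cF_{x+x'}\cap D$ has to be split as $z=z_1\sqcup z_2$ with $z_1\in\cF_x$ and $z_2\in\cF_{x'}$; property~(i) then places $z_1,z_2$ in $D$, and $T(z)=T(z_1)+T(z_2)\leq\widetilde T_D(x)+\widetilde T_D(x')$ yields the bound $\widetilde T_D(x+x')\leq\widetilde T_D(x)+\widetilde T_D(x')$.

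The principal obstacle is precisely this last fragment decomposition: showing that any $z$ satisfying $z\bot(x+x'-z)$ genuinely splits as a disjoint sum of a fragment of $x$ and a fragment of $x'$. In a Dedekind complete ambient vector lattice this is immediate from the band projections onto the disjoint bands generated by $x$ and $x'$, but since $E$ is here only an arbitrary vector lattice the decomposition has to be carried out inside the principal band of $x+x'$, by taking $z_1$ to be the part of $z$ lying in the band of $x$ and then checking $|z_1|\wedge|x-z_1|=0$ directly from $|x|\wedge|x'|=0$ and $z\bot(x+x'-z)$. Once this structural lemma is granted, the four steps above combine to give a positive, orthogonally additive, order bounded extension $\widetilde T_D\in\cU_+(E,F)$ of $T$.
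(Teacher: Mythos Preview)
Your proposal is correct and uses precisely the extension formula the paper records immediately after the theorem,
\[
\widetilde T_D(x)=\sup\{T(y)\colon y\in\cF_x\cap D\},
\]
so there is no divergence in approach. The paper itself does not supply a proof but cites \cite{Gum}, Theorem~1; your write-up essentially reconstructs that argument, including the correct identification of the one nontrivial step, namely that every fragment $z\sqsubseteq x\sqcup x'$ decomposes as $z=z_1\sqcup z_2$ with $z_1\sqsubseteq x$ and $z_2\sqsubseteq x'$. This fragment-splitting is indeed available in any vector lattice via the Riesz decomposition property (the paper tacitly uses the same fact in the proof of Lemma~\ref{Gum-2}), so no completeness assumption on $E$ is needed and the proof goes through as you outline.
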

The operator $\widetilde{T}_{D}\colon E\to F$ (or, for simplicity,
$\widetilde{T}\colon E\to F$)
is  defined by the formula\footnote{\,At least $0\in D\cap\cF_x$ for any $x\in E$.}
\be\label{f50}
    \widetilde{T}x=\sup\{Ty\colon y\in\mathcal{F}_{x}\cap D\}.
\ee
\par
Such an extension of $T$ is not unique. 
Due to the next lemma the operator $\widetilde{T}\in\mathcal{U}_{+}(E,F)$ is called the {\it minimal extension}
(with respect to $D$) of the positive, order bounded orthogonally additive
operator $T \colon D\to F$.
\begin{lemma}\label{Gum-1}
Let $E,F,D,T,\widetilde{T}$ be  as in Theorem~\ref{Gum} and let $R\colon E\to F$ be a positive
abstract Uryson operator such that $Rx=Tx$ for every $x\in D$.
Then  $\widetilde{T}x\leq Rx$ for every $x\in E$.
\end{lemma}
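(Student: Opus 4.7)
The plan is to exploit the orthogonal additivity and positivity of $R$ together with the fact that $R$ agrees with $T$ on $D$, and then take the supremum in the definition (\ref{f50}) of $\widetilde{T}$.

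First I fix an arbitrary $x\in E$ and pick any $y\in\mathcal{F}_x\cap D$. Since $y$ is a fragment of $x$, we have the orthogonal decomposition $x=y\sqcup(x-y)$, where $x-y\in\mathcal{F}_x$ as well (this is a standard property of fragments in Archimedean vector lattices). Applying the orthogonal additivity of the abstract Uryson operator $R$ gives
\[
    Rx \;=\; Ry + R(x-y).
\]
Because $R$ is positive and $x-y\geq 0$ might not hold in general, one has to be a bit more careful: what one really needs is $R(x-y)\geq 0$, and this follows from positivity of $R$ applied to the fragment $x-y$ regardless of its sign in the lattice sense — but in fact abstract Uryson positivity means $Rz\geq 0$ for every $z\in E$, so $R(x-y)\geq 0$ is automatic. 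Hence $Ry\leq Rx$. Since $y\in D$ we have $Ry=Ty$, and therefore
\[
    Ty \;\leq\; Rx \qquad \text{for every } y\in\mathcal{F}_x\cap D.
\]

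Taking the supremum over all such $y$ on the left and using the definition (\ref{f50}) yields
\[
    \widetilde{T}x \;=\; \sup\{Ty\colon y\in\mathcal{F}_x\cap D\} \;\leq\; Rx,
\]
which is the desired inequality. Since $x\in E$ was arbitrary, this completes the proof.

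There is no real obstacle here; the only thing one has to be slightly attentive to is that the supremum in (\ref{f50}) is taken over $\mathcal{F}_x\cap D$, which is nonempty because $0\in D$ (as $D$ is a lateral ideal) and $0\in\mathcal{F}_x$, and that the supremum exists as a bounded-above set in the Dedekind complete lattice $F$ (guaranteed by Theorem~\ref{Gum}). The argument is essentially a one-line consequence of orthogonal additivity plus positivity, so the "hard" work has already been done in Theorem~\ref{Gum}, which establishes that $\widetilde{T}$ is well-defined as an abstract Uryson operator.
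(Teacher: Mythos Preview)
Your proof is correct and follows essentially the same approach as the paper: fix $x\in E$ and $y\in\mathcal{F}_x\cap D$, use orthogonal additivity to write $Rx=Ry+R(x-y)=Ty+R(x-y)\geq Ty$, then take the supremum over $y$ to obtain $\widetilde{T}x\leq Rx$. Your additional remarks clarifying that positivity of an abstract Uryson operator means $Rz\geq 0$ for \emph{all} $z\in E$ (so no sign condition on $x-y$ is needed) and that $\mathcal{F}_x\cap D$ is nonempty are helpful elaborations, but the argument is otherwise identical to the paper's.
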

\begin{proof}
Take an arbitrary element $x\in E$ and $y\in\mathcal{F}_{x}\cap  D$. Then
\begin{eqnarray*}
R(x) &   =  & R(x-y)+R(y)=R(x-y)+Ty\geq Ty \quad\mbox{and} \\
R(x) & \geq & \sup\{Ty\colon\,y\in\mathcal{F}_{x}\cap D\}=\widetilde{T}x.
\end{eqnarray*}
\end{proof}
\par
Now the second main result of this section can be provided.
\begin{thm}\label{op-4}
Let $E$ be an atomless vector lattice. Then  $\Phi_{1}(\cU_{\sigma c}(E,\R))=\{0\}$.
\end{thm}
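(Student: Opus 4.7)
Assume for contradiction that there exists a non-zero $\varphi\in\Phi_{1}(\cU_{\sigma c}(E,\R))$; passing to $|\varphi|$ (since $\Phi_{1}$ is an ideal) one may take $\varphi>0$. Fix a majorant $\psi\geq 0$ and some $x\in E$ with $\varphi(x)>0$. Lemma~\ref{op-4001} supplies mutually disjoint fragments $x_{n}\sqsubseteq x$ with $\varphi(x_{n})>0$; positivity and orthogonal additivity of $\psi$ together with $\bigsqcup_{i=1}^{N}x_{i}\sqsubseteq x$ force $\sum_{n}\psi(x_{n})\leq\psi(x)<\infty$, so after passing to a subsequence one may assume $\psi(x_{n})\leq 2^{-n}$. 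Put $\alpha_{n}:=(n^{2}\varphi(x_{n}))^{-1}>0$, so that $\alpha_{n}\varphi(x_{n})=n^{-2}$ (making $\sum_{n}\alpha_{n}\varphi(x_{n})$ convergent) while the ratio $\alpha_{n}\varphi(x_{n})/\psi(x_{n})\geq 2^{n}/n^{2}\to\infty$.

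The next step is to build a positive abstract Uryson operator $\widetilde{T}\in\cU_{+}(E,\R)$ that equals $\alpha_{k}\varphi$ on each $\cF_{x_{k}}$. Since the $x_{n}$ are pairwise disjoint, the lateral ideals $D_{n}:=\cF_{x_{n}}$ are mutually disjoint, and Lemma~\ref{Gum-2} makes $L:=L((D_{n}))$ a lateral ideal. Define $T_{0}\colon L\to\R_{+}$ by
\[
T_{0}\Bigl(\bigsqcup_{i}z_{i}\Bigr):=\sum_{i}\alpha_{n_{i}}\varphi(z_{i}),\qquad z_{i}\in D_{n_{i}}.
\]
Then $T_{0}$ is orthogonally additive and bounded by $\sum_{n}\alpha_{n}\varphi(x_{n})<\infty$, so Theorem~\ref{Gum} extends it to $\widetilde{T}\in\cU_{+}(E,\R)$. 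Because every fragment of $x_{k}$ lies in $D_{k}\subseteq L$, one has $\widetilde{T}(z)=T_{0}(z)=\alpha_{k}\varphi(z)$ for all $z\in\cF_{x_{k}}$.

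Now Theorem~\ref{f}(2) combined with the orthogonal additivity and positivity of $\varphi$ gives
\[
(\widetilde{T}\wedge n\varphi)(x_{k})=\inf\{\alpha_{k}\varphi(y)+n\varphi(x_{k}-y):y\sqsubseteq x_{k}\}=\min(n,\alpha_{k})\varphi(x_{k}),
\]
since $\varphi(y)$ attains $0$ at $y=0$ and $\varphi(x_{k})$ at $y=x_{k}$. For $n\geq\alpha_{k}$ the right-hand side equals $k^{-2}$. Since $\cU_{\sigma c}(E,\R)$ is a projection band of $\cU(E,\R)$, Proposition~\ref{t3.28} yields $\varphi\in\Phi_{1}(\cU(E,\R))$, so the finite-element inequality applied to $\widetilde{T}\in\cU(E,\R)$ produces $c>0$ with $(\widetilde{T}\wedge n\varphi)(x_{k})\leq c\,\psi(x_{k})\leq c\cdot 2^{-k}$. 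Combining these, $k^{-2}\leq c\cdot 2^{-k}$ for all $k$, which fails for large $k$.

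The main obstacle is engineering $\widetilde{T}$: one cannot freely prescribe the values $\widetilde{T}(x_{k})$, because in an atomless lattice the fragments of each $x_{k}$ form a rich family on which the operator must be orthogonally additive. The fix is to define $T_{0}$ as $\alpha_{n}\varphi$ on each entire $\cF_{x_{n}}$; this makes $T_{0}$ orthogonally additive by piggybacking on $\varphi$, lets Gumenchuk's theorem produce a global extension $\widetilde{T}$, and (the crucial payoff) forces the Riesz--Kantorovich infimum $(\widetilde{T}\wedge n\varphi)(x_{k})$ to collapse to $\min(n,\alpha_{k})\varphi(x_{k})$, which can be driven up to $\alpha_{k}\varphi(x_{k})$ by taking $n$ large.
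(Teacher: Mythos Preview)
Your argument is correct and noticeably cleaner than the paper's. Both proofs start the same way (Lemma~\ref{op-4001}, summability of $\psi(x_n)$, passage to a subsequence with $\psi(x_{n_k})$ decaying like $k^{-4}$ or $2^{-k}$), but then diverge. The paper introduces an auxiliary $T$, projects it onto the band $\{\varphi\}^{\perp\perp}$ via the formula~(\ref{f48}) from \cite{PW}, restricts $\pi_\varphi T$ to each $\cF_{x_n}$, extends these restrictions back to $\widetilde{G}_n\in\cU_+(E,\R)$, and then takes a weighted supremum $\widetilde{R}$ of the $\widetilde{G}_{n_k}$; the point of all this machinery is to guarantee $\widetilde{R}\in\{\varphi\}^{\perp\perp}$, whence $\widetilde{R}\leq c_{\widetilde{R}}\psi$, which is then contradicted. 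You bypass the band projection entirely: you glue the scaled functionals $\alpha_n\varphi$ directly on $L((\cF_{x_n}))$, extend once, and obtain the key bound $(\widetilde{T}\wedge n\varphi)(x_k)\leq c\,\psi(x_k)$ by first lifting $\varphi$ to $\Phi_1(\cU(E,\R))$ through Proposition~\ref{t3.28}. Your explicit evaluation $(\widetilde{T}\wedge n\varphi)(x_k)=\min(n,\alpha_k)\varphi(x_k)$, which exploits $\widetilde{T}=\alpha_k\varphi$ on all of $\cF_{x_k}$ together with $\varphi(y)+\varphi(x_k-y)=\varphi(x_k)$, is the decisive simplification; the paper never computes such an infimum in closed form.

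One point you use implicitly deserves a sentence: after invoking Proposition~\ref{t3.28} to get $\varphi\in\Phi_1(\cU(E,\R))$, you keep the \emph{same} majorant $\psi$. This is legitimate because $\cU_{\sigma c}(E,\R)$ is a projection band: for any $S\in\cU(E,\R)$ one has $|S|\wedge n\varphi\in\cU_{\sigma c}(E,\R)$ (it is below $n\varphi$), hence $|S|\wedge n\varphi=|p_0 S|\wedge n\varphi\leq c_{p_0 S}\psi$, so $\psi$ is already an $\cU(E,\R)$-majorant. With that remark added, your proof is complete and more elementary than the paper's, as it avoids both the projection formula~(\ref{f48}) and the sup-construction of $\widetilde{R}$.
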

\begin{proof}
Assume that there exists an element $\phi\in\Phi_{1}(\cU_{\sigma c}(E,\R)), \,\phi>0$. Fix a positive 
laterally $\sigma$-continuous majorant $\psi$ for $\phi$.
Then for some $x\in E, \,x\neq 0$ one has $\phi(x)>0$. Since $E$ is atomless by Lemma~\ref{op-4001} it can be deduced  
that there exists a sequence $(x_{n})_{n\in\N}$ of mutually disjoint fragments of $x$ such that $\phi(x_{n})>0$   
for every $n\in\N$. 
Take now a positive functional $T\in \cU_{\sigma c}(E,\R)$ with $T(x)>0$ and $T(x_n)>0$ 
for every $n\in \N$ (e.g. $T=\phi$). 
Since $\phi$ is a finite element there is some $c_T>0$, such that 
$(\pi_\varphi T)(x_{n})\leq c_T\psi(x_n), \, n\in\N$.  
Consider the functional  
\[    G_n\colon \cF_{x_n}\to \R_+       \] 
defined on the lateral ideal $\cF_{x_n}$
by $G_n(y)=(\pi_\phi T)(y)$.
Then $G_n$ is an orthogonally additive functional, the set $G_n(\cF_{x_n})$ is (order) bounded
and $G_n(x_n)=(\pi_\phi T)(x_n), \, n\in \N$.
According to Theorem~\ref{Gum}, $G_n$ can be extended to the functional 
$\widetilde{G}_{n}\in\cU_+(E,\R)$ which, according to (\ref{f50}), 
is well defined on $E$ for every $n\in \N$.
Since $(\pi_\phi T)(x_n)\geq (T\wedge n\phi)(x_n)$ one has $(\pi_\phi T)(x_n)>0, \,n\in \N$. 
By Lemma~\ref{Gum-1} the inequality $\widetilde{G}_{n}(x)\leq (\pi_\phi T)(x)$ holds for every $x\in E$, i.e.
$\widetilde{G}_n\leq\pi_\phi T$ and $\widetilde{G}_n\in\{\phi\}^{\bot\bot}, \, n\in\N$.
Moreover, $\widetilde{G}_{n}(x_n)=(\pi_\phi T)(x_n)>0$.
It is clear that  $\widetilde{G}_n\leq c_T\psi$, $n\in\N$.
In view of the fact\footnote{\,see footnote at page \pageref{endlich} .} 
that $\sum\limits_{n=1}^{\infty}\psi(x_n)= \psi(x)<\infty$, for every $k\in\N$
there exists an index $n_k$, such that $\psi(x_{n_k})<\frac{1}{k^4}$.
For every $k\in \N$ fix now numbers $\beta_{n_k}$ such that
\[
\frac{1}{k^{3}\widetilde{G}_{n_k}(x_{n_k})}<\beta_{n_k}<\frac{1}{k^{2}\widetilde{G}_{n_k}
(x_{n_k})}.
\]
Then
\[
\sum_{k=1}^{\infty}\beta_{n_k}\widetilde{G}_{n_k}(x_{n_k})<
\sum_{k=1}^{\infty}\frac{\widetilde{G}_{n_k}(x_{n_k})}{k^{2}\widetilde{G}_{n_k}
(x_{n_k})}=\sum_{k=1}^{\infty}\frac{1}{k^{2}}<\infty.
\]
Observe that $\mathcal{F}=(\mathcal{F}_{x_{n_{k}}})_{_{k\in \N}}$
is a sequence of mutually disjoint lateral ideals. Thus by Lemma~\ref{Gum-2} the set $L(\mathcal{F})$ 
is also a lateral ideal.
Denote the operators $G_{n_{k_i}}$ by $R_i$
and define the operator $R\colon L(\mathcal{F})\to\R_+$ by the formula
\[
R\Big(\bigsqcup\limits_{i=1}^{k}u_{i}\Big)=\sum\limits_{i=1}^{k}R_i(u_i).
\]
It will be shown that $R$ is an orthogonally additive operator from the lateral ideal $L(\mathcal{F})$ to
$\R$. Indeed, take $u,v\in L(\mathcal{F})$ such that $u\bot v$. Then
\begin{eqnarray*}
R(u+v) & = & R\Big(\bigsqcup\limits_{i=1}^{k}u_{i}+\bigsqcup\limits_{j=1}^{m}v_{j} \Big)=
R\Big(\bigsqcup\limits_{r=1}^{k+m}z_{r}\Big)=\sum\limits_{r=1}^{k+m}R_r(z_r) \\
      & = & \sum\limits_{i=1}^{k}R_i(u_i)+\sum\limits_{j=1}^{m}R_j(v_j)=R(u)+R(v), \\
      &     \mbox{where }\;&
          z_{r}= \left\{ \begin{array}{ccl}
                         u_{r},  & \mbox{if } & 1\leq r\leq k  \\
                         v_{r-k},& \mbox{if } & k  <  r\leq k+m.
                         \end{array}\right.
\end{eqnarray*}
For any element $u=\bigsqcup\limits_{i=1}^k u_i\in L(\mathcal{F})$ with $u_i\in \cF_{x_{n_{k_i}}}$, due to
\[
 G_{k_i}(u_i)=(\pi_\phi T)(u_i)\leq c_T\psi(u_i)\leq c_T\psi(x_{n_{k_i}})<\frac{c_T}{(k_i)^4},
\]
one has
\[
R(u)=R\Big(\bigsqcup\limits_{i=1}^{k}u_{i}\Big)=\sum\limits_{i=1}^{k}R_{i}(u_i)<
   c_T \sum\limits_{i=1}^{k}\frac{1}{(k_i)^4}<c_T\sum_{k=1}^{\infty}\frac{1}{k^{4}},
\]
and therefore, the operator $R$ is order bounded.
In view of Theorem \ref{Gum} there exists the minimal extension $\widetilde{R}$ of $R$, 
which is a positive abstract Uryson functional $\widetilde{R}\colon E\to\R$ such that
$\widetilde{R}(x)=\sup\{R(v)\colon v\in\mathcal{F}_{x}\cap\, L(\mathcal{F})\}$ for any $x\in E$.
Observe that $\widetilde{R}(x_{n_{k}})=\widetilde{G}_{n_{k}}(x_{n_{k}})$ for every $k\in \N$.
Let be $S\colon E\to \R$ an abstract, positive Uryson functional such that $S\geq\widetilde{G}_{n_{k}}$
for any $k\in \N$ and fix an arbitrary element $x\in E$.
Then for every decomposition $x=y+z$, where $y\bot z$ and
\[
z\in L(\mathcal{F}),\;\mbox{ i.e. }\; z=\bigsqcup\limits_{i=1}^{m}u_{i}\;\mbox{ with }\;
u_{i}\in\mathcal{F}_{x_{n_{k_{i}}}}\,\mbox{ for some } m,
\]
one has
\[S(x)=S(y+z)\geq S(z)=S\Big(\bigsqcup\limits_{i=1}^{m}u_{i}\Big)
\geq \sum\limits_{i=1}^{m}G_{n_{k_i}}(u_i)= \sum\limits_{i=1}^{m}R_i(u_i).
\]
Passing to the supremum over all fragments $z\in L(\mathcal{F})$ we conclude that $S(x)\geq\widetilde{R}(x)$
for every $x\in E$. Hence $\widetilde{R}=\sup\limits_{i\in\N}\{\widetilde{G}_{n_{k_i}}\}$ in $\mathcal{U}(E,F)$.
Using the fact that $\widetilde{G}_{n_{k_i}}\in\{\varphi\}^{\bot\bot}$
for every $i\in\N$, we deduce that $\widetilde{R}\in\{\varphi\}^{\bot\bot}$.
Therefore a number $c_{\widetilde{R}}>0$ exists with $\widetilde{R}\leq c_{\widetilde{R}}\psi$.
For any number $k\in\N$ such that $c_{\widetilde{R}}\leq k$ one has
\[
c_{\widetilde{R}}\psi(x_{n_k}) \leq k\psi(x_{n_k}) < \frac{1}{k^{3}} < \beta_{n_k}G_{n_k}(x_{n_k})=
\beta_{n_k}\widetilde{G}_{n_k}(x_{n_k})=\widetilde{R}(x_{n_k}),
\]
what is a contradiction.
\end{proof}
Now we are ready to put together the Theorems~\ref{op-3} and \ref{op-4}.
\begin{thm}\label{op-5}
Let $E$ be a vector lattice with the  projection property and
$\phi\in\Phi_{1}(\mathcal{U}_{\sigma c}(E,\R))$.
Then there exists a finite dimensional projection band $M$ generated by a (finite) number of mutually 
disjoint  atoms in  $E$, such that $\varphi(x)=0$ for every $x\in M^{\bot}$.
\end{thm}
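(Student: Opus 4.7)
The plan is to combine Theorems~\ref{op-3} and~\ref{op-4} through the atomic/atomless band decomposition of Proposition~\ref{pr-1}, deriving the required band $M$ from the finitely many atom-directions on which $\phi$ is nontrivial. First I would use the projection property to write $E = E_0 \oplus E_1$ with $E_0$ atomic and $E_1 = E_0^d$ atomless, and denote the associated band projections by $\pi_0, \pi_1$. Orthogonal additivity gives $\phi(x) = \phi(\pi_0 x) + \phi(\pi_1 x)$ for every $x \in E$, so $\phi = \phi_0 + \phi_1$ with $\phi_i := \phi \circ \pi_i$. Each $\phi_i$ remains $\sigma$-laterally continuous, since a band projection is order continuous and sends laterally convergent sequences to laterally convergent sequences. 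Under the natural identification $\mathcal{U}_{\sigma c}(E_i, \R) \hookrightarrow \mathcal{U}_{\sigma c}(E, \R)$, these subspaces form complementary projection bands, so Proposition~\ref{t3.28} transfers finiteness and gives $\phi_i \in \Phi_1(\mathcal{U}_{\sigma c}(E_i, \R))$.

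Next I would apply the two main theorems of the section. Theorem~\ref{op-4}, applied on the atomless $E_1$, forces $\phi_1 = 0$. Theorem~\ref{op-3}, applied to $\phi_0$ on the atomic $E_0$, yields finitely many mutually disjoint atoms $e_1, \ldots, e_n \in E_0$ with $\phi_0(e_i) \neq 0$; iterating the theorem (any atom $e$ disjoint from the $e_i$ with $\phi_0(\lambda e) \neq 0$ for some $\lambda$ would enlarge the disjoint family, and Theorem~\ref{op-3} forbids doing this indefinitely) further shows that $\phi_0$ is identically zero on the line $\R e$ for every atom $e \in E_0$ disjoint from $\{e_1, \ldots, e_n\}$. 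By Proposition~\ref{pr}(ii)--(iii), $M := \bigoplus_{i=1}^n \R e_i$ is an $n$-dimensional projection band in $E$ generated by mutually disjoint atoms, which is the band required by the theorem.

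It remains to show $\phi(x) = 0$ for every $x \in M^\bot$. Writing $x = \pi_0 x + \pi_1 x$ and using $\phi_1 = 0$, the problem reduces to showing that $\phi_0$ vanishes on $L := E_0 \cap M^\bot$, which is an atomic projection band on whose atoms $\phi_0$ already vanishes by construction. Passing to $|\phi_0|$ (which by Theorem~\ref{f}(5) satisfies $|\phi_0|(e) = |\phi_0(e)| = 0$ on every atom $e$ of $L$ and, by Theorem~\ref{f}(6), dominates $\phi_0$ pointwise), the set $S := \{z \in L^+ : |\phi_0|(z) = 0\}$ is a $\sigma$-laterally closed lateral ideal that contains every atom-fragment of every $y \in L^+$. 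The crux is to show $S = L^+$: when $y$ has countable atomic support this follows directly from $\sigma$-lateral continuity, by laterally approximating $y$ via the finite partial sums of its atom-fragments, all of which lie in $S$. The main obstacle I expect lies in the uncountable case, which I would handle either by exploiting the finite-element majorant of $\phi_0$ to reduce to a countable effective support, or by adapting the iterative atom-stripping scheme used in the proof of Lemma~\ref{op-4001} and Theorem~\ref{op-4} to manufacture a contradiction with the finite-element hypothesis on $\phi_0$.
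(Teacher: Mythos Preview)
Your overall route is the same as the paper's: decompose $E=E_0\oplus E_1$ via Proposition~\ref{pr-1}, establish the band splitting $\mathcal{U}_{\sigma c}(E,\R)=\mathcal{U}_{\sigma c}(E_0,\R)\oplus\mathcal{U}_{\sigma c}(E_1,\R)$, invoke Proposition~\ref{t3.28} to get $\phi_i\in\Phi_1(\mathcal{U}_{\sigma c}(E_i,\R))$, and then apply Theorems~\ref{op-4} and~\ref{op-3} to the two pieces. Your extraction of a \emph{maximal} finite disjoint family from Theorem~\ref{op-3} (so that $\phi_0$ vanishes on every atom in $M^\bot$) is a correct refinement the paper leaves implicit.

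The one place where you and the paper diverge is exactly the step you single out as the crux: passing from ``$\phi_0$ vanishes on every atom of $E_0\cap M^\bot$'' to ``$\phi_0$ vanishes on all of $M^\bot$''. The paper disposes of this in one sentence, asserting that ``every element $x\in M^{\bot}$ is a linear combination of atoms disjoint to $M$ and therefore $\phi_{0}(x)=0$''. Taken literally this is not true in an infinite-dimensional atomic lattice, and no appeal to (sequential) lateral continuity is made. Your more careful treatment --- using $|\phi_0|$, Theorem~\ref{f}(5)--(6), and $\sigma$-lateral continuity to cover the countable-support case, and flagging the uncountable-support case as the residual difficulty --- is thus not merely a different path but a genuine attempt to fill a point the paper glosses over. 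In short, your proposal already matches the paper's proof; the obstacle you anticipate is not resolved there either.
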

\begin{proof}
By Proposition~\ref{pr-1} there exists a decomposition into mutually complemented
bands $E=E_{0}\oplus E_1$, where $E_0$
is an atomic vector lattice and $E_1$ is an atomless vector lattice.
For the finite elements in $\cU_{\sigma c}(E, \R)$ there holds the equality 
\[
\Phi_1(\cU_{\sigma c}(E_0\oplus E_1, \R))= \Phi_1(\cU_{\sigma c}(E_0, \R))\oplus\Phi_1(\cU_{\sigma c}(E_1, \R)).
\]
For that it is proved first that
\begin{align}\label{a}
\cU_{\sigma c}(E_0\oplus E_1, \R)= \cU_{\sigma c}(E_0, \R)\oplus \cU_{\sigma c}(E_1, \R).
\end{align}
Take $f_{i}\in\cU_{\sigma c}(E_i, \R), \, i=0,1$. 
Define the functional $f=f_{0}\oplus f_{1}$ for each $x=(x_0,x_1)\in E$ by the formula
$f(x_{0},x_{1})=f_{0}(x_{0})+f_{1}(x_{1})$, where $x_0\in E_0, \, x_1\in E_1$. 
The functional $f$ belongs to the set $\cU_{\sigma c}(E_0\oplus E_1, \R)$. 
Indeed, take a sequence $(x_n)_{n\in \N}$ in $E_0\oplus E_1$, 
\hide{net $(x_{\alpha})_{\alpha\in\Lambda}$ in $E_0\oplus E_1$, 
such that $x_\alpha \overset{\rm lat}\longrightarrow x$, where $x_\alpha=(x_{\alpha 0},x_{\alpha 1})$ and $x=(x_0,x_1)$. 
}
such that $x_n \overset{\rm lat}\longrightarrow x$, where $x_n=(x_{n 0},x_{n 1})$ and $x=(x_0,x_1)$. 
Then 
\begin{align*}
\hide{f(x_{\alpha})=f(x_{\alpha 0},x_{\alpha 1})=f_{0}(x_{\alpha 0})+
f_{1}(x_{\alpha 1})
}
f(x_n)=f(x_{n 0},x_{n 1})=f_{0}(x_{n 0})+f_{1}(x_{n 1})\overset{\rm (o)}\longrightarrow f_0(x_0)+f_1(x_1)=f(x).
\end{align*}
On the other hand, let $f\in\cU_{\sigma c}(E_0\oplus E_1, \R)$. 
Denote by $f_{i}$ the restriction of $f$ on $E_{i}, \, i=0,1$. Then $f=f_{0}+f_{1}$, 
with $f_{i}\in\cU_{\sigma c}(E_{i}, \R)$. 
\par
Now it will be shown that $\cU_{\sigma c}(E_0, \R)^{\bot}=\cU_{\sigma c}(E_1, \R)$ and, 
therefore \linebreak $\cU_{\sigma c}(E_0, \R)$ and $\cU_{\sigma c}(E_1, \R)$ are mutually disjoint bands 
in $\cU_{\sigma c}(E_0\oplus E_1, \R)$. Hence the equality (\ref{a}) will be established. 
Let $0\leq f_{i}\in \cU_{\sigma c}(E_{i}, \R), \, i=0,1$ and $x\in E$. 
Then $x=x_0\sqcup x_1$ with $x_i\in E_i,\, i=0,1$ and 
\[
(f_{0}\wedge f_{1})(x)=\inf\{f_{0}(y)+f_{1}(z)\colon x=y\sqcup z\}\leq f_{0}(x_1)+f_{1}(x_0)=0.
\]
Since  $\cU_{\sigma c}(E_0\oplus E_1, \R)$ is Dedekind complete,
Proposition \ref{t3.28} guarantees the required equality
$$
\Phi_1(\cU_{\sigma c}(E_0\oplus E_1,\R))= \Phi_1(\cU_{\sigma c}(E_0,\R))\oplus \Phi_1(\cU_{\sigma c}(E_1,\R)).
$$
\par
For $\phi\in \Phi_1(\cU_{\sigma c}(E,\R))$ one has now $\phi=\phi_0+\phi_1$, where $\phi_i\in \Phi_1(\cU_{\sigma c}(E_i,\R))$ 
for $i=0,1$. 
Theorem~\ref{op-4} implies $\phi_{1}=0$ and, by Theorem~\ref{op-3} there exist only finite 
many $e_{1},\dots,e_{n}$ of mutually disjoint atoms in $E_{0}$, such that $\phi_{0}(e_{i})\neq 0,\, i=1,\ldots,n$.
Denote by $M$ the band in $E_{0}$, generated by $e_{1},\dots,e_{n}$. 
In view of the assumption on $E$ the band $M$ is a projection band in $E_0$. 
Then every element $x\in M^{\bot}$ is a linear combination of atoms disjoint to $M$ and therefore,
$\phi_{0}(x)=0$. Thus $\phi(x)=0$ for all $x\in M^\bot$. 
\end{proof}
\par\medskip
%
\section{Rank one operators as finite elements in $\cU(E,F)$}
%
%
Let $E,F$ be  vector lattices. An operator $T\in\cU(E,F)$ is called  a {\it finite rank} operator, if
$T=\sum\limits_{i=1}^{n}\phi_{i}\otimes u_{i}$ for some $n\in \N$, where $\phi_{i}\in
\cU(E,\R),\,u_{i}\in F$ and,  $(\phi_i\otimes u_i)(x)=\phi_i(x)\,u_i, \,x\in E$
for all $i=1,\ldots,n$. Similarly to the case of linear rank one operators in the vector lattice of
regular operators the modulus of a rank one abstract Uryson operator has a simple structure.
\begin{prop}\label{mod}
Let $E,F$ be vector lattices, with $F$ Dedekind complete. Then the modulus of the operator
$T=\phi\otimes u\in \cU(E,F)$ is the operator $|T|=|\phi|\otimes|u|$.
\end{prop}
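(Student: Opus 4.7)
The plan is to compute $|T|(x)$ directly via formula (3) of Theorem~\ref{fjjjjjgg}, which represents $|T|(x)$ as the supremum of the upward-directed net of finite sums $\sum_{i=1}^{n}|T(x_i)|$ taken over all partitions $x=\bigsqcup_{i=1}^{n}x_i$, $n\in\N$. For $T=\phi\otimes u$ and any such partition one has
$$
\sum_{i=1}^{n}|T(x_i)| \;=\; \sum_{i=1}^{n}|\phi(x_i)\,u| \;=\; \Big(\sum_{i=1}^{n}|\phi(x_i)|\Big)\,|u|,
$$
so the net of vector-valued sums is precisely the net of scalar sums $\sum_{i=1}^{n}|\phi(x_i)|$ multiplied by the fixed positive vector $|u|\in F$.

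Applying Theorem~\ref{fjjjjjgg}(3) to the abstract Uryson functional $\phi\in\cU(E,\R)$, these scalar sums form an upward-directed net with supremum $|\phi|(x)$. It then remains to show that scaling by the positive element $|u|$ preserves the order-supremum, i.e.\ that whenever $a_\alpha\uparrow s$ in $\R$ one has $a_\alpha |u|\uparrow s\,|u|$ in $F$. Indeed, any upper bound $v\geq a_\alpha|u|$ satisfies $s|u|-v \leq (s-a_\alpha)|u|$ for every $\alpha$, and since $(s-a_\alpha)\downarrow 0$ in $\R_+$ and $F$ is Archimedean (being Dedekind complete), this forces $v\geq s\,|u|$. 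Combining these observations yields
$$
|T|(x) \;=\; |\phi|(x)\,|u| \;=\; (|\phi|\otimes|u|)(x) \qquad \text{for every } x\in E,
$$
which is the claim. Along the way one notes that $|\phi|\otimes|u|$ genuinely belongs to $\cU(E,F)$: it is orthogonally additive because $|\phi|$ is, and order bounded because $|\phi|$ maps order intervals in $E$ into bounded subsets of $\R$, which are then sent by scalar multiplication with $|u|$ into order bounded subsets of $F$.

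The main obstacle is precisely the transition from the scalar supremum $\sum_i|\phi(x_i)|\uparrow|\phi|(x)$ to the supremum in $F$. This step is not automatic, because $u$ need not be positive; Theorem~\ref{fjjjjjgg}(3) is what makes the argument clean, since after factoring out $|u|$ only a positive vector remains and the Archimedean property of $F$ closes the gap. By contrast, an attempt based on the formula $|T|(x)=\sup\{T(y)-T(z)\colon x=y\sqcup z\}$ would require decomposing $u=u^+-u^-$ and reconciling two independent scalar suprema over the same set of partitions, which is noticeably more delicate.
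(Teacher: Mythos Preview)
Your proof is correct and follows essentially the same approach as the paper: both invoke formula (3) of Theorem~\ref{fjjjjjgg}, factor $|u|$ out of each summand, and then identify the remaining scalar supremum as $|\phi|(x)$. The paper performs the passage $\sup_\alpha(a_\alpha|u|)=(\sup_\alpha a_\alpha)|u|$ without comment, whereas you supply an explicit Archimedean argument for it; your additional remarks on $|\phi|\otimes|u|\in\cU(E,F)$ and on the alternative route via $|T|(x)=\sup\{T(y)-T(z)\}$ are extraneous commentary rather than a different method.
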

\begin{proof}
Using the relation (3) of Theorem~\ref{fjjjjjgg} one has 
\begin{eqnarray*}
|T|(x) & = &|\phi\otimes u|(x) =  \sup\Big\{\sum_{i=1}^{n}|(\phi\otimes u)(x_i)|\colon
                             x =  \bigsqcup\limits_{i=1}^{n}x_i,\, n\in\N  \Big\}  \\
       & = & \sup\Big\{\sum_{i=1}^{n}|\phi(x_i)u|\colon
                             x = \bigsqcup\limits_{i=1}^nx_i,\,n\in\N   \Big\} \\
       & = & |u|\sup\Big\{\sum_{i=1}^{n}|\phi(x_i)|\colon
                             x = \bigsqcup\limits_{i=1}^{n}x_i,\,n\in\N\Big\}
                               =  |\phi|(x)|u|.
\end{eqnarray*}
\end{proof}
The following theorem tells us that the constituent parts of an abstract Uryson rank one operator $T$
are finite elements in the corresponding vector lattices, whenever $T$ is a finite element in $\cU(E,F)$.
Recall that the order dual of the vector lattice $F$ is denoted by $F^{\sim}$.
The expression $F^{\sim}$ {\it separates the points} of $F$ means that for each $0\neq y\in F$
there exists some $f\in F^{\sim}$ with $f(y)\neq 0$. The order dual separates the points of $F$, e.g., if $F$ is a 
Dedekind complete Banach lattice.
\begin{thm}
Let $E,F$ be vector lattices  with $F$ Dedekind complete and $F^{\sim}$ separates the 
points of $F$.
Let  $T\in\cU(E,F)$ be a rank one operator, i.e.  $T=\phi\otimes u$ for some $\phi\in \cU(E,\R)$ and $u\in F$.
If $T\in\Phi_{1}(\cU(E,F))$ then  $\phi\in\Phi_{1}(\cU(E,\R))$
and $u\in\Phi_{1}(F)$.
\end{thm}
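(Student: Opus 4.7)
The plan is to reduce to the positive case and then extract finiteness of the two factors separately, testing the defining inequality of $T$ against cleverly chosen comparison operators built from $\phi$ and $u$ themselves.

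First I would reduce to $\phi\ge 0$ and $u\ge 0$, both nonzero (otherwise $T=0$ and the claim is trivial). By Proposition~\ref{mod} one has $|T|=|\phi|\otimes|u|$, and $|T|\in\Phi_1(\cU(E,F))$ because $\Phi_1(\cU(E,F))$ is an ideal. If one shows the theorem for the positive rank--one operator $|\phi|\otimes|u|$, then $\phi\in\Phi_1(\cU(E,\R))$ and $u\in\Phi_1(F)$ follow since $\Phi_1(\cU(E,\R))$ and $\Phi_1(F)$ are ideals. Fix then a positive majorant $Z\in\cU(E,F)_+$ for $T$.

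To establish $u\in\Phi_1(F)$, pick any $x_0\in E$ with $\phi(x_0)>0$. For arbitrary $y\in F_+$ I use the test operator $S:=\phi\otimes y$, so that $|S|=\phi\otimes y$. The key computation is that for any orthogonal partition $x_0=\bigsqcup_{i=1}^k x_i$, positivity of $\phi$ (hence $\phi(x_i)\ge 0$) together with $y,u\ge 0$ allows the factorization $(\phi(x_i)y)\wedge(n\phi(x_i)u)=\phi(x_i)(y\wedge nu)$, and orthogonal additivity of $\phi$ then collapses $\sum_i\phi(x_i)(y\wedge nu)$ to $\phi(x_0)(y\wedge nu)$, independent of the decomposition. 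Theorem~\ref{fjjjjjgg}(1) therefore gives $(|S|\wedge n|T|)(x_0)=\phi(x_0)(y\wedge nu)$. The finiteness inequality $|S|\wedge n|T|\le c_S Z$ evaluated at $x_0$ yields $y\wedge nu\le\tfrac{c_S}{\phi(x_0)}Z(x_0)$ for every $n$, exhibiting $Z(x_0)\in F$ as a majorant of $u$.

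For $\phi\in\Phi_1(\cU(E,\R))$ the separation assumption on $F^\sim$ enters. Since $u>0$ and $F^\sim$ is a vector lattice separating the points of $F$, I pick $f\in F^\sim_+$ with $f(u)>0$. For arbitrary $\psi\in\cU(E,\R)_+$ the symmetric test operator $S:=\psi\otimes u$ satisfies, by an analogous computation in which $u\ge 0$ is pulled out of the meet, the identity $(|S|\wedge n|T|)(x)=u\cdot(\psi\wedge n\phi)(x)$ for all $x\in E$. Applying $f$ to the inequality $|S|\wedge n|T|\le c_S Z$ and dividing by $f(u)>0$ gives $(\psi\wedge n\phi)(x)\le\tfrac{c_S}{f(u)}\tilde Z(x)$, where $\tilde Z:=f\circ Z$. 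A short verification shows $\tilde Z\in\cU(E,\R)$: it is orthogonally additive because $Z$ is and $f$ is linear, and order bounded because $Z$ sends order intervals in $E$ to order bounded sets in $F$ and $f\in F^\sim$. Thus $\tilde Z$ is a majorant for $\phi$.

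The main technical point is the clean identification of the meets in $\cU(E,F)$ through Theorem~\ref{fjjjjjgg}(1). Everything rests on the sign conditions $\phi\ge 0$ and $u\ge 0$ secured in the reduction step, which allow the scalar $\phi(x_i)$, respectively the positive vector $u$, to be pulled out of $\wedge$ inside the defining infimum; without this the computation would break down. The hypothesis that $F^\sim$ separates the points of $F$ is used only once, in the last step, to convert the $F$-valued majorant inequality into a scalar one.
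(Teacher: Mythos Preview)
Your proof is correct and follows essentially the same approach as the paper: reduce to the positive case via Proposition~\ref{mod}, test the finiteness inequality against $S=\phi\otimes y$ at a point $x_0$ with $\phi(x_0)>0$ to extract $u\in\Phi_1(F)$, and then against $S=\psi\otimes u$ composed with a positive functional from $F^{\sim}$ to extract $\phi\in\Phi_1(\cU(E,\R))$. The only cosmetic difference is that in the second step you assert the equality $(|S|\wedge n|T|)(x)=u\,(\psi\wedge n\phi)(x)$ (which is indeed correct, using $\alpha u\wedge\beta u=\min(\alpha,\beta)u$ for $u\ge 0$ and Archimedean order continuity of $\lambda\mapsto\lambda u$), whereas the paper is content with the one-sided inequality $(|\theta|\wedge n\phi)(x)\,u\le(|S|\wedge nT)(x)$; either suffices.
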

\begin{proof}
By Proposition~\ref{mod} it suffices to consider a positive abstract Uryson operator $T=\phi\otimes u$,
with  $0<\phi\in\cU(E,\R)$, $0<u\in F$.
By assumption $T$ is a finite element and therefore an operator
$Z\in\cU_+(E,F)$ exists, such that for every $S\in\cU(E,F)$ the inequality
\[             |S|\wedge nT\leq c_{S}Z         \]
holds for some $c_S>0$ and  every $n\in\N$.
Consider the operator $S=\phi\otimes h$ for $h\in F$ and fix $m\in\N$.
Then for every  $x\in E$ we have
\begin{eqnarray*}
      c_{S}Z(x) & \geq & (|S|\wedge mT)(x) \\
                &  =  & \inf\Big\{\sum_{i=1}^{n}|S|(x_{i})\wedge
                      mT(x_{i})\colon x=\bigsqcup_{i=1}^{n}x_{i}, \,n\in\N\Big\}  \\
                &  =  & \inf\Big\{\sum_{i=1}^{n}|h|\phi(x_{i})\wedge m u\,\phi(x_{i})\colon
                      x=\bigsqcup_{i=1}^{n}x_{i},\; n\in\N\Big\} \\
                &  =  & \inf\Big\{\sum_{i=1}^{n}\phi(x_{i})(|h|\wedge (m u))\colon
                      x=\bigsqcup_{i=1}^{n}x_{i},\;n\in\N\Big\} \\
                &  =  & \phi(x)(|h|\wedge (m u)).
\end{eqnarray*}
The abstract Uryson functional $\phi$ is a nonzero positive element in $\cU(E,\R)$,
hence there exists $x_{0}\in E$, such that $\phi(x_{0})>0$. By means of the last estimation
the inequality
\[
|h|\wedge(m u)\leq \frac{c_{S}}{\phi(x_0)}Z(x_{0})=\mu Z(x_{0})
\]
holds with $\mu=\frac{c_{S}}{\phi(x_0)}\in\R_+$ and arbitrary $m\in\N$.
Since $h$ is an arbitrary element of $F$ it is proved that $u\in\Phi_{1}(F)$.
\par
For proving the second assertion consider the rank one operator $S=\theta\otimes u$,
where $\theta\in\cU(E,\R)$ is arbitrary. For every $x\in E$ and $n\in\N$ we may write
\[
(|\theta|\wedge
n\phi)(x)u\leq(|\theta|(x)u)\wedge(n\phi(x)u)=(|S|(x))\wedge (nT(x)).
\]
Then for every disjoint partition $\{x_{1},\ldots,x_{n}\}$
of $x$, i.e. $x=\bigsqcup\limits_{i=1}^n x_i$, by using the orthogonal additivity of the
abstract Uryson functional $|\theta|\wedge n\phi$ we have
\[
(|\theta|\wedge n\phi)(x)u=\sum\limits_{i=1}^{n}(|\theta|\wedge n\phi)(x_{i})u\leq
        \sum\limits_{i=1}^{n}|S|(x_{i})\wedge nT(x_{i}).
\]
After taking the infimum over all disjoint partitions of $x$ on the right side of last formula, one has 
\[
(|\theta|\wedge n\phi)(x)u\leq(|S|\wedge nT)(x)\leq c_{S}Z(x)
\]
for every  $x\in E$ and $n\in\N$.
If now $\tau$ is a positive linear functional on $F$, such that $\tau(u)=1$  then
\[
\tau\big((|\theta|\wedge n\phi)(x)u\big)=(|\theta|\wedge n\phi)(x)\leq\tau(c_{S}Z(x))=
                                   c_{S}(\tau\circ \,Z)(x),
\]
where $x\in E,\,n\in\N$, and  $\tau\, Z \in\cU_+(E,\R)$.
Thus  $\phi\in\Phi_{1}(\cU(E,\R))$ is proved and, $\tau\circ \,Z $ is one of its majorants.
\end{proof}
\hspace*{-3mm} The converse statement still remains to be an open question. 
\vspace{1cm}

\end{document}